\documentclass[reqno]{amsart}
\usepackage{CJK}
\usepackage{hyperref}
\usepackage{cite}
\usepackage{amsmath}
\usepackage{mathrsfs}

\textheight 20.0cm \textwidth 14.0cm
\numberwithin{equation}{section}

\newtheorem{theorem}{Theorem}[section]
\newtheorem{lemma}[theorem]{Lemma}
\newtheorem{definition}[theorem]{Definition}

\newtheorem{proposition}[theorem]{Proposition}
\newtheorem{remark}[theorem]{Remark}

\allowdisplaybreaks

\newcommand{ \mint }{ {\int\hspace{-0.38cm}-}}

\begin{document}
	
	\title[\hfil H\"{o}lder regularity for mixed local and nonlocal equations\dots] {H\"{o}lder regularity for mixed local and nonlocal $p$-Laplace parabolic equations}
	
	\author[B. Shang and C. Zhang  \hfil \hfilneg]
	{Bin Shang and Chao Zhang$^*$}
	
	\thanks{$^*$Corresponding author.}
	
	\address{Bin Shang \hfill\break
		School of Mathematics, Harbin Institute of Technology,
		Harbin 150001, P.R. China} \email{shangbin0521@163.com}
	
	\address{Chao Zhang\hfill\break
		School of Mathematics and Institute for Advanced Study in Mathematics, Harbin Institute of Technology,
		Harbin 150001, P.R. China} \email{czhangmath@hit.edu.cn}

	\subjclass[2010]{35B45, 35B65, 35K55, 36K65, 35K67}.
	\keywords{Expansion of positivity; H\"{o}lder continuity; Mixed local and nonlocal parabolic $p$-Laplace equation}

	\maketitle

	\begin{abstract}
		We give a unified proof of H\"{o}lder regularity of weak solutions for mixed local and nonlocal $p$-Laplace type parabolic equations with the full range of exponents  $1<p<\infty$. Our proof is based on the expansion of positivity together with the energy estimate and De Giorgi type lemma.
	\end{abstract}
	
	\section{Introduction}
	\label{sec1}
	\par
	
	In this paper, we study the regularity results for the following mixed parabolic problem
	\begin{align}
	\label{1.1}
	\partial_t u(x,t)-\Delta_p u(x,t)+\mathcal{L}u(x,t)=0 \quad \text{in } E_T,\quad  1<p<\infty,
	\end{align}
	where $E_T:=E\times(0,T)$ with $T>0$ and $E$ being an open set in $\mathbb{R}^N$. The local $p$-Laplace operator is given by
	\begin{align*}
	-\Delta_p u:=-\mathrm{div}(|\nabla u|^{p-2}\nabla u).
	\end{align*}
	The nonlocal $p$-Laplace operator is defined as
	\begin{align}
	\label{1.2}
	\mathcal{L}u(x,t)=\mathrm{P.V.}\int_{\mathbb{R}^N}K(x,y,t)|u(x,t)-u(y,t)|^{p-2}(u(x,t)-u(y,t))\,dy,
	\end{align}
	where $\mathrm{P.V.}$ denotes the Cauchy principle value. The kernel $K$ satisfies that $K(x,y,t)=K(y,x,t)$ and
	\begin{align}
	\label{1.3}
	\frac{\Lambda^{-1}}{|x-y|^{N+sp}}\leq K(x,y,t)\leq \frac{\Lambda}{|x-y|^{N+sp}}
	\end{align}
	with $\Lambda\geq1$ and $0<s<1$ for every $x,y\in\mathbb{R}^N$ and $t\in(0,T)$. This kind of problems, which were extensively applied in plasma physics\cite{Bd13} and biology\cite{DLV21}, arise from the mixture of classical random walk and L\'{e}vy flight.
	
	Before giving our main contribution, let us review some known results. In the local setting, the H\"{o}lder regularity of weak solutions to the usual $p$-Laplace equation
	\begin{equation}
	\label{1.4}
	\partial_t u-\mathrm{div}(|\nabla u|^{p-2}\nabla u)=0
	\end{equation}
	was proved  by DiBenedetto\cite{D86} for $p>2$ and  by Chen-DiBenedetto\cite{CD88,CD89} for $1<p<2$. A comprehensive treatment of this issue goes back to the monograph \cite{D93} in which the  intrinsic scaling method plays a central role. Roughly speaking, the main idea of this method is to make the solutions of equation \eqref{1.4} behave like the solutions of heat equation over the intrinsic parabolic cylinders. It is worth mentioning that a more simple approach so-called expansion of positivity was introduced by DiBenedetto-Gianazza in \cite{DG08} and by DiBenedetto-Gianazza-Vespri in the monograph \cite{DGV12} to  access the theory of regularity for quasilinear parabolic equations. Based on expansion of positivity, Liao \cite{L20}  derived the H\"{o}lder continuity of locally bounded solutions to parabolic equations with structures modeled after the parabolic $p$-Laplace equation and the porous medium equation. We refer the readers to \cite{AM07,BDL21,DF85,M13,M02,W86} and references therein for more related results.

	For the nonlocal $p$-Laplace equation
	\begin{align}
	\label{1.5}
	\partial_t u(x,t)+\mathcal{L}u(x,t)=0,
	\end{align}
	where $\mathcal{L}$ is defined as in \eqref{1.2}, Str\"{o}mqvist\cite{Str19} showed the existence and boundedness of weak solutions for the case $p>2$ with the kernel $K(x,y,t)$ satisfying \eqref{1.3}. In view of iterated discrete differentiation of the equation in the spirit of Moser's technique, H\"{o}lder estimates with specific exponents of weak solutions in the case $p>2$
	was established by Brasco-Lindgren-Str\"{o}mqvist in\cite{BLS21}. Regarding the inhomogeneous case of \eqref{1.5}, Ding-Zhang-Zhou \cite{DZZ21}  proved the local boundedness ($1<p<\infty$) and H\"{o}lder regularity ($2<p<\infty$) for the local weak solutions by using the De Giorgi-Nash-Moser iteration. Very recently, the  H\"{o}lder regularity of weak solutions of \eqref{1.5}  in the full range $1<p<\infty$ was investigated by Liao \cite{L22}   without using any logarithmic estimate and any comparison principle. At the same time, Adimurthi-Prasad-Tewary \cite{APT22} obtained the same result by employing the expansion of positivity and provided a new form of isoperimetric inequality. More results can be found in \cite{CCV11,CD14,KS14,FK13}.
	
	In recent years, the mixed local and nonlocal problems have attracted increasing interest. For the special scenario
	\begin{align}
	\label{1.6}
	-\Delta u+(-\Delta)^s u=0,
	\end{align}
	Foondun\cite{F09} obtained the Harnack inequality and H\"{o}lder continuity by probabilistic methods,  see also \cite{CKSV12} for an another argument. Some qualitative and quantitative properties such as the maximum principle, interior Sobolev regularity and boundary regularity were developed by Biagi-Dipierro-Valdinoci-Vecchi in \cite{BDVV21}, where the authors utilized difference quotient and constructed barriers appropriately. Concerning the nonlinear framework of \eqref{1.6}
	\begin{align*}
	-\Delta_p u+(-\Delta)^s_p u=0,
	\end{align*}
	Garain-Kinnunen\cite{GK21} first studied the regularity properties of weak solutions incorporating local boundedness, H\"{o}lder continuity and Harnack estimates along with lower semicontinuity of weak supersolutions. When it comes to the parabolic analogue of \eqref{1.6}, Garain-Kinnunen\cite{GK} worked with sign changing supersolution and investigated the Harnack inequality with a tail term. A priori H\"{o}lder estimate, parabolic Harnack principle and heat kernel estimates were established in \cite{CK10}. Furthermore, Fang-Shang-Zhang \cite{FSZ21} proved the local boundedness and H\"{o}lder continuity of weak solutions to \eqref{1.1} for $1<p<\infty$ and $2<p<\infty$, respectively. We remark that, for the technical reason, it is necessary to impose the assumption that $p>2$ in order to obtain the Logarithmic type estimate. As a consequence, this restriction prevents an extension of H\"{o}lder continuity in \cite{DZZ21,FSZ21} to the subquadratic case $1<p<2$.

	The present paper is a continuation of our previous work \cite{FSZ21} to derive the H\"{o}lder continuity of weak solutions to \eqref{1.1} for  the subquadratic case that $1<p<2$.  Motivated by the ideas developed in a series of papers \cite{DG08,DGV12,L20,L22}, we will adopt the methods of expansion of positivity with strong geometric characteristics to obtain the regularity results of weak solutions in the full range of exponents $1<p<\infty$. We employ the Caccioppoli type inequality, De Giorgi's iteration without using Logarithmic type estimates and exponential change of variables. To overcome the difficulties arising from the nonlocal operator, the tail term (see \eqref{1.7} below) and the information of solutions will appear via an either-or form. We will estimate the tail term carefully so that it can be controlled by the local oscillation, which allows us to utilize the measure information and then to get the reduction of oscillation. Notice that our results also hold for mixed local and nonlocal $p$-Laplace parabolic equations with more general structure conditions.
	
	\smallskip
	Before stating the notations of weak solutions to \eqref{1.1}, we introduce the tail space
	\begin{align*}
	L_\alpha^q(\mathbb{R}^{N}):=\left\{v \in L_{\rm{loc}}^q(\mathbb{R}^{N}): \int_{\mathbb{R}^N} \frac{|v(x)|^q}{1+|x|^{N+\alpha}}\,dx<+\infty\right\}, \quad q>0 \text{ and } \alpha>0.
	\end{align*}
	
	Define the nonlocal tail by
	\begin{align}
	\label{1.7}
	\mathrm{Tail}(v;x_o,R;t_o-S,t_o):=\operatorname*{ess\,\sup}_{t_o-S <t<t_o}\left(R^p \int_{\mathbb{R}^{N} \backslash B_R(x_o)} \frac{|v(x, t)|^{p-1}}{|x-x_o|^{N+sp}}\,dx\right)^{\frac{1}{p-1}}.
	\end{align}
	If $x_o=0$, we denote tail as $\mathrm{Tail}(v;R;t_o-S,t_o)$.
	It is obvious that $\mathrm{Tail}(v;x_o,R;t_o-S,t_o)$ is well-defined for any $v\in L^\infty(t_o-S,t_o;L_{sp}^{p-1}(\mathbb{R}^N))$.

	Next, we present the definition of weak solutions to \eqref{1.1} as follows.
	
	\begin{definition}
		\label{def-1-1}
		A function $u$ is a weak subsolution (super-) to equation  \eqref{1.1} if 	$u\in L^p_{\rm{loc}}(0,T;W_{\rm {loc}}^{1,p}(E)) \cap C_{\rm{loc}}(0,T;L_{\rm {loc}}^2(E))\cap L^\infty(0,T;L_{sp}^{p-1}(\mathbb{R}^N))$ for any closed interval $[t_1, t_2] \subseteq(0,T)$ and any compact subset $K\subset E$, there holds that
		\begin{align}	\label{1.8}
		&\int_K u(x,t_2)\zeta(x,t_2)\,dx-\int_K u(x,t_1)\zeta(x,t_1)\,dx
		-\int_{t_1}^{t_2} \int_K u(x,t)\partial_t \zeta(x,t)\,dxdt\nonumber\\
		&\quad+\int_{t_1}^{t_2}\int_K|\nabla u|^{p-2}\nabla u\cdot \nabla\zeta\,dxdt +\int_{t_1}^{t_2} \mathcal{E}(u,\zeta,t)\,dt \leq(\geq) 0,
		\end{align}
		for every nonnegative test function $\zeta \in L^p_{\rm{loc}} (0,T;W^{1,p}_0(K))\cap W^{1,2}_{\rm{loc}}(0,T;L^2(K))$, where
		\begin{align*}
		\mathcal{E}(u,\zeta,t):=\frac{1}{2} \int_{\mathbb{R}^N} \int_{\mathbb{R}^N}&\Big[|u(x,t)-u(y,t)|^{p-2}(u(x,t)-u(y,t))\\
		&\times (\zeta(x,t)-\zeta(y,t))K(x,y,t)\Big]\,dxdy.
		\end{align*}
		A function $u$ is a local weak solution to \eqref{1.1} if and only if $u$ is a local weak subsolution and supersolution.
	\end{definition}

	We now are in position to state our main result as follows.
	
	\begin{theorem}
		\label{thm-1-2}
		Let $1<p<\infty$. Assume that $u$ is a locally bounded, local weak solution to equation \eqref{1.1} in $E_T$. Then $u$ is locally H\"{o}lder continuous in $E_T$. More precisely,  there exist constants $\gamma>1$ and $\beta\in (0,1)$ depending a prior on $N,p,s,\Lambda$ such that for any $0<\rho<R<\widetilde{R}\leq 1$, the cylinders $(x_o,t_o)+Q_\rho(\omega^{2-p}):=B_\rho(x_o)\times(t_o-\omega^{2-p}\rho^p,t_o)\subset(x_o,t_o)+Q_{\widetilde{R}}:=B_{\widetilde{R}}\times(t_o-\widetilde{R}^p,t_o)$ are included in $E_T$,
		it holds that	
		\begin{align*}
		\operatorname*{ess\,osc}_{(x_o,t_o)+Q_\rho(\omega^{2-p})} u\leq\gamma\omega\left(\frac{\rho}{R}\right)^\beta,
		\end{align*}
		where
		\begin{align*}
		\omega=2 \operatorname*{ess\,\sup}_{(x_o,t_o)+Q_{\widetilde{R}}}|u|+\mathrm{Tail}(u;x_o,\widetilde{R};t_o-\widetilde{R}^p,t_o).
		\end{align*}	
	\end{theorem}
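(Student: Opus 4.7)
The plan is to reduce Theorem~\ref{thm-1-2} to a one-step oscillation decay: one seeks universal constants $\sigma,\eta\in(0,1)$ such that whenever $\operatorname*{ess\,osc}_{Q_R(\omega^{2-p})}u\leq\omega$, together with the matching tail control $\operatorname{Tail}(u;x_o,R;\dots)\leq\omega$, then $\operatorname*{ess\,osc}_{Q_{\sigma R}((\eta\omega)^{2-p})}u\leq\eta\omega$. Iterating along the geometric sequences $R_k=\sigma^{k}R$ and $\omega_k=\eta^{k}\omega$ yields the H\"older decay with exponent $\beta=\log(1/\eta)/\log(1/\sigma)$, and the passage from the non-intrinsic starting cylinder $(x_o,t_o)+Q_{\widetilde{R}}$ to the first intrinsic one is absorbed by the defining inequality $\omega\geq 2\operatorname*{ess\,sup}|u|+\operatorname{Tail}(u;x_o,\widetilde{R};\dots)$.

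Inside a single step, the expansion-of-positivity dichotomy is carried out. Writing $\mu^{+}=\operatorname*{ess\,sup}_{Q}u$ and $\mu^{-}=\operatorname*{ess\,inf}_{Q}u$ with $\mu^{+}-\mu^{-}\leq\omega$, one of the level sets $\{u>(\mu^{+}+\mu^{-})/2\}$ or $\{u<(\mu^{+}+\mu^{-})/2\}$ must occupy at most half of some suitably chosen sub-cylinder $Q'\subset Q$; replacing $u$ by $-u$ we may assume the upper set is sparse. A De Giorgi type lemma---derived from the Caccioppoli inequality of \cite{FSZ21} by testing \eqref{1.8} with $(u-k)_{+}\varphi^{p}$---then upgrades this measure sparsity to the pointwise bound $u\leq\mu^{+}-\eta\omega$ on a smaller intrinsic cylinder, which is the desired oscillation reduction. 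This is the purely De Giorgi formulation of expansion of positivity used in \cite{L20,L22,APT22}, which avoids logarithmic estimates altogether.

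The two technical pillars are therefore the energy inequality and the De Giorgi lemma. Testing \eqref{1.8} with $(u-k)_{\pm}\varphi^{p}$ produces a local piece controlling $|\nabla(u-k)_{\pm}|^{p}$, a Steklov-averaged time-boundary term in $(u-k)_{\pm}^{2}$, a symmetric Gagliardo piece on $\operatorname{supp}\varphi$, and a nonlocal tail piece bounded by a power of $\operatorname{Tail}(u;x_o,R;\dots)$. Running the De Giorgi iteration over levels $k_j=k_\infty-2^{-j}\omega$, the tail contribution at each level is absorbed via the standing assumption $\omega\geq\operatorname{Tail}$; this absorption is precisely the \emph{either-or} mechanism flagged in the introduction---at each step one is either in a regime where the local oscillation dominates the tail (so the iteration proceeds as in the local case), or the tail itself already forces the desired reduction.

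The main obstacle is the subquadratic regime $1<p<2$. The intrinsic time scale $\omega^{2-p}R^{p}$ \emph{increases} as $\omega$ decreases, reversing the usual $p>2$ geometry and forcing one to argue slice-wise rather than on a single large cylinder; following \cite{L22,APT22} I would prove a shrinking lemma for the time-slice measure of small level sets and then glue across slices via a forward-in-time covering argument. The absence of logarithmic estimates additionally means the pointwise-to-measure implication (propagation of positivity in time) must be extracted directly from the energy inequality applied to $(u-k)_{-}$, rather than from a $\log$-type test function. Re-estimating the tail at every dyadic scale and verifying that $\operatorname{Tail}(u;x_o,\sigma^{k}R;\dots)\leq C\omega_{k}$ persists throughout the iteration is the place where the admissible range of $\sigma,\eta$---and hence the final H\"older exponent $\beta$---is ultimately pinned down.
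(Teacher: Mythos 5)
The proposal has the right broad architecture (one-step oscillation decay driven by expansion of positivity and a De Giorgi iteration, with tail absorption and re-estimation across dyadic scales), which matches the paper's strategy. However, the central mechanism as you describe it would not close. You write that one of $\{u>(\mu^++\mu^-)/2\}$ or $\{u<(\mu^++\mu^-)/2\}$ occupies at most half of a sub-cylinder and that a De Giorgi lemma then ``upgrades this measure sparsity to the pointwise bound.'' That is a genuine gap: the De Giorgi lemma (Lemma~\ref{lem-3-2} of the paper) only produces a pointwise bound once the relevant level set has measure below a \emph{small} threshold $\nu$ depending on the data, which is much smaller than $\tfrac12$. Measure ``at most half'' is far from sufficient. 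The bridge---and the actual content of ``expansion of positivity'' here---is the chain: start from measure $\geq\tfrac12|B_\rho|$ of the \emph{complementary} good set at a single time slice; propagate that half-measure forward in time using only the energy estimate (Lemma~\ref{lem-3-4}, and this is exactly where the paper replaces the usual logarithmic test function); then apply the measure-shrinking Lemma~\ref{lem-3-5} to push the bad set's relative measure below $\nu$ in an intrinsic cylinder; and only then invoke the De Giorgi Lemma~\ref{lem-3-2}. Your description elides precisely these two intermediate lemmas, which are the technically novel part of the argument.

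Separately, your analysis of the $1<p<2$ difficulty misidentifies the scaling direction. For $1<p<2$ one has $2-p>0$, so $\omega^{2-p}\rho^p$ \emph{decreases} as $\omega$ decreases, and the intrinsic cylinders nest the easy way (this is used in Proposition~\ref{pro-4-1} via $\delta(\sigma\epsilon\xi\omega)^{2-p}\leq\delta(\xi\omega)^{2-p}$). It is the degenerate case $p>2$, with $2-p<0$, where the time span \emph{grows} under level reduction; that is why the paper's Section~\ref{sec5} introduces an extra scaling parameter $a$ in $Q_R(a\theta)$, argues in arbitrary time slices $\bar t$, and carefully tunes $c$ and $a$ so the cylinders nest. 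The subquadratic range was the motivation for the paper because earlier works needed $p>2$ for log-type estimates, but within the expansion-of-positivity framework $1<p\leq 2$ is the simpler case, not the harder one.
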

	
	\begin{remark}
		In Theorem \ref{thm-1-2}, we assume that the local weak solution $u$ is locally bounded. In fact, the local boundedness of weak solutions has been proved  in our previous paper \cite[Theorems 1.2 and 1.3]{FSZ21}. We also stress that Theorem \ref{thm-1-2} still holds for more general jumping kernel $K(x, y, t)$ such as
		\begin{align*}
		K(x,y,t):=\frac{1}{|x-y|^{N+s_1 p}}  \textbf{{\rm 1}}_{\{|x-y| \leq 1\}}+\frac{1}{|x-y|^{N+s_2 p}} \textbf{{\rm 1}}_{\{|x-y|>1\}},
		\end{align*}
		where $s_1,s_2\in(0,1).$
		
	\end{remark}
	
	This paper is organized as follows. In Section \ref{sec2}, we fix basic notations and present several auxiliary lemmas. Section \ref{sec3} is devoted to establishing some lemmas which are crucial to obtain the H\"{o}lder regularity. We will give the proof of Theorem \ref{thm-1-2} with $1<p\leq2$  in Section \ref{sec4}. Finally, the other case $p>2$ of Theorem \ref{thm-1-2} will be proved in Section \ref{sec5}.

	\section{Preliminaries}
	\label{sec2}
	
	In this section, we first introduce some notations and then give several useful lemmas.
	
	\subsection{Notation}
	
	For convenience, we give some notations to be used later. Let $B_\rho(y)$ be the open ball centered at $x_o\in\mathbb{R}^N$ with radius $\rho>0$. For fixed $(x_o, t_o)\in\mathbb{R}^N\times\mathbb{R}$, we define the forward and backward parabolic cylinders as
	$$
	(x_o, t_o)+Q_{\rho}^{+}(\theta)=B_{\rho}(x_o) \times\left(t_o, t_o+\theta \rho^{p}\right]
	$$
	and
	$$
	(x_o,t_o)+Q_{\rho}^{-}(\theta)=B_{\rho}(x_o) \times\left(t_o-\theta \rho^{p}, t_o\right].
	$$
	For simplicity, we will omit $(x_o,t_o)$ from the definitions above. We also omit $\theta$ when $\theta=1$. In addition, define the backward parabolic cylinders
	$$
	(x_o,t_o)+Q(R,S)=B_R(x_o) \times\left(t_o-S, t_o\right].
	$$
	
	Set
	\begin{align*}
	d\mu=d\mu(x,y,t)=K(x,y,t)\,dxdy.
	\end{align*}
	
	For fixed $k\in \mathbb{R}$, let
	\begin{align*}
	(u-k)_+=\max\{u-k,0\} \quad \textmd{and} \quad  (u-k)_-=\max\{-(u-k),0\}.
	\end{align*}

	Throughout the paper, we always assume that $0<\rho<R<\widetilde{R}\leq 1$.
	
	\subsection{Auxiliary lemmas}
	\begin{lemma} [Lemma 2.3, \cite{FSZ21}]
		\label{lem-2-1}
		Let $0<t_1<t_2$ and $p\in(1,\infty)$. Then for every
		\begin{align*}
		u\in L^p\left(t_1,t_2;W^{1,p}(B_\rho)\right) \cap L^\infty\left(t_1,t_2;L^2(B_\rho)\right),
		\end{align*}
		it holds that
		\begin{align*}
		&\quad\int_{t_1}^{t_2} \mint_{B_\rho}|u(x,t)|^{p(1+\frac{2}{N})}\,dxdt \nonumber\\
		&\leq C\left(\rho^p \int_{t_1}^{t_2}\mint_{B_\rho} |\nabla u(x,t)|^p\,dxdt+\int_{t_1}^{t_2} \mint_{B_\rho}|u(x,t)|^p\,dxdt\right) \nonumber\\
		&\quad \times \left(\operatorname*{ess\,\sup}_{t_1<t<t_2} \mint_{B_\rho}|u(x, t)|^2\,dx\right)^{\frac{p}{N}},
		\end{align*}
		where $C>0$ only depends on $p$ and $N$.
	\end{lemma}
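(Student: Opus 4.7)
The plan is to establish the inequality as a parabolic Sobolev interpolation, combining a spatial Gagliardo--Nirenberg estimate with integration in time.

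First, for a.e.\ fixed $t\in(t_1,t_2)$, I would prove the pointwise-in-time inequality
\begin{align*}
\int_{B_\rho}|u(x,t)|^{p(N+2)/N}\,dx \leq C\left(\int_{B_\rho}|\nabla u|^p\,dx+\rho^{-p}\int_{B_\rho}|u|^p\,dx\right)\left(\int_{B_\rho}|u|^2\,dx\right)^{p/N}.
\end{align*}
This is a Gagliardo--Nirenberg estimate with interpolation exponent $\theta=N/(N+2)$: one verifies that the scaling relation $\frac{1}{q}=\theta(\frac{1}{p}-\frac{1}{N})+(1-\theta)\frac{1}{2}$ holds with $q=p(N+2)/N$ and $r=2$, so $\|u\|_{L^q(\mathbb{R}^N)}\leq C\|\nabla u\|_{L^p}^{\theta}\|u\|_{L^2}^{1-\theta}$, and raising to the $q$-th power yields exactly the displayed bound. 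On the ball, the lower-order $\int_{B_\rho}|u|^p$ term accommodates the possibly nonzero boundary trace of $u$, and the explicit $\rho$-dependence of the constant is pinned down by a standard rescaling $x=\rho y$.

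Next I would integrate this pointwise-in-time bound over $t\in(t_1,t_2)$. Since the $L^2$-in-$x$ factor enters multiplicatively, it can be pulled out of the time integral as an essential supremum. Converting the resulting absolute integrals into averages by dividing by $|B_\rho|$ uses the identity $|B_\rho|^{p/N}\simeq\rho^p$: this absorbs the $\rho^{-p}$ prefactor attached to $\int|u|^p$ and produces the single $\rho^p$ in front of the gradient average on the right-hand side, matching the stated form exactly.

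The main obstacle is a uniform treatment of the full range $1<p<\infty$. For $p<N$ the Gagliardo--Nirenberg step reduces to Sobolev embedding $W^{1,p}(B_\rho)\hookrightarrow L^{Np/(N-p)}(B_\rho)$ followed by H\"{o}lder interpolation with $L^2$. For $p\geq N$ the Sobolev conjugate degenerates, but Gagliardo--Nirenberg still holds with the same $\theta=N/(N+2)\in(0,1)$; one viable route is to embed the problem into a higher-dimensional Sobolev framework, or to invoke the integrated form of Gagliardo--Nirenberg valid for all $1\leq p<\infty$. Either way the constant depends only on $N$ and $p$, as asserted.
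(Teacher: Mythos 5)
Your proof is correct and follows essentially the same route as the source: the paper does not prove Lemma \ref{lem-2-1} here but imports it from \cite{FSZ21}, where (as in DiBenedetto's classical parabolic Sobolev embedding) the argument is exactly your slice-wise Gagliardo--Nirenberg interpolation with $\theta=N/(N+2)$, rescaled to $B_\rho$ to produce the $\rho^p$ and lower-order $\int|u|^p$ terms, followed by integration in time and pulling out the $\operatorname*{ess\,\sup}$ of the $L^2$ factor. Your exponent bookkeeping ($\theta q=p$, $(1-\theta)q=2p/N$, $|B_\rho|^{p/N}\simeq\rho^p$) and the observation that $\theta<1$ keeps the Gagliardo--Nirenberg inequality valid for all $1<p<\infty$ are both accurate.
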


	\begin{lemma} [Lemma 4.1, \cite{D93}]
		\label{lem-2-2}
		Let $\{Y_j\}_{j=0}^\infty$ be a sequence of positive numbers such that
		\begin{align*}
		Y_{j+1}\leq Kb^jY_j^{1+\delta},\quad j=0,1,2, \ldots
		\end{align*}
		for some constants $K$, $b>1$ and $\delta>0$. If
		\begin{align*}
		Y_o\leq K^{-\frac{1}{\delta}}b^{-\frac{1}{\delta^2}},
		\end{align*}
		then we have $\lim_{j\rightarrow\infty}Y_j=0$.
	\end{lemma}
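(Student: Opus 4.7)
The plan is to establish the geometric-decay bound $Y_j \le Y_0\, b^{-j/\delta}$ for all $j \ge 0$ by induction on $j$, since this immediately forces $Y_j \to 0$ as $b>1$. The ansatz $Y_j \le Y_0\, c^j$ is natural because the recursion is multiplicative; matching the powers of $b$ that appear after one step of the recursion then forces the choice $c = b^{-1/\delta}$, and the smallness condition on $Y_0$ drops out as exactly the constraint needed to close the induction.

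First I would set $c = b^{-1/\delta}$ and verify the base case $Y_0 \le Y_0$. For the inductive step, assume $Y_j \le Y_0\, b^{-j/\delta}$. Plugging into the hypothesis gives
\begin{align*}
Y_{j+1} \le K b^j Y_j^{1+\delta}
     \le K b^j Y_0^{1+\delta} b^{-j(1+\delta)/\delta}
     = K Y_0^\delta \cdot Y_0 \cdot b^{-j/\delta}.
\end{align*}
To advance the induction I need $K Y_0^\delta \cdot Y_0 \cdot b^{-j/\delta} \le Y_0\, b^{-(j+1)/\delta}$, which simplifies to $K Y_0^\delta \le b^{-1/\delta}$, i.e.\ $Y_0 \le K^{-1/\delta} b^{-1/\delta^2}$. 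This is exactly the smallness hypothesis, so the induction closes.

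With the bound $Y_j \le Y_0\, b^{-j/\delta}$ in hand, the conclusion is immediate: since $b>1$ and $\delta>0$, the factor $b^{-j/\delta}$ tends to $0$ as $j\to\infty$, so $\lim_{j\to\infty} Y_j = 0$.

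There is no real obstacle here, as this is a classical De Giorgi-type iteration lemma; the only subtlety is identifying the correct geometric ratio $c = b^{-1/\delta}$, which is forced by requiring that the exponents of $b$ and the factors of $Y_0$ balance after one application of the recursion. Once this choice is made, the smallness threshold $K^{-1/\delta} b^{-1/\delta^2}$ is dictated precisely by the inductive step.
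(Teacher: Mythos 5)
Your proof is correct and is exactly the classical argument: the paper itself does not prove this lemma but cites it from DiBenedetto's monograph, where the standard proof is precisely your induction showing $Y_j \le Y_0\, b^{-j/\delta}$, with the smallness hypothesis on $Y_0$ being what closes the inductive step.
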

	
	\section{crucial results}
	\label{sec3}

	In this section, several preliminary results will be discussed.  We first present the following Caccioppoli type estimate for weak subsolutions of \eqref{1.1} and the supersolution case is similar. 
	\begin{proposition} [Caccioppoli-type inequality]
		\label{lem-3-1}
		Assume that $p>1$ and $u$ is a local weak subsolution to \eqref{1.1}. Let cylinders $Q(R,S)\subset E_T$. For any piecewise smooth cutoff functions $\zeta(x,t)\in [0,1]$ vanishing on $\partial B_\rho$ and $\zeta(x,t)=0$ for $t<t_o-S$ it holds that
		\begin{align}
		\label{3.1}
		&\mathop{\mathrm{ess}\,\sup}_{t_o-s<t<t_o}\int_{B_R} w_+^2\zeta^p(x,t)\,dx-\int_{B_R}w_+^2\zeta^p(x,t_o-S)\,dx
		+\iint_{Q(R,S)}|\nabla w_+(x,t)|^p\zeta^p(x,t)\,dxd\tau\nonumber\\
		&+\int_{t_o-S}^{t_o}\int_{B_R} \int_{B_R} \min\{\zeta(x,t),\zeta(y,t)\}^p \frac{|w_+(x,t)-w_+(y,t)|^p}{|x-y|^{N+sp}}\,dxdyd\tau\nonumber\\
		&+\iint_{Q(R,S)} w_+\zeta^p(x,t)\int_{B_R} \frac{w_-^{p-1}(y, t)}{|x-y|^{N+sp}}\,dydxd\tau \nonumber\\
		\leq&\gamma\iint_{Q(R,S)}w_+^p(x,t)|\nabla\zeta(x,t)|^p+w_+^2(x,t)|\partial_t \zeta^p|\,dxd\tau\nonumber\\
		&+\gamma \int_{t_o-S}^{t_o}\int_{B_R}\int_{B_{R}}\max\{w_+(x,t), w_+(y,t)\}^p \frac{|\zeta(x,t)-\zeta(y,t)|^p}{|x-y|^{N+sp}}\,dxdyd\tau \nonumber\\
		&+\gamma\mathop{\mathrm{ess}\,\sup}_{\stackrel{t_o-S<t<t_o}{x\in \mathrm{supp}\,\zeta(\cdot,t)}}\int_{\mathbb{R}^N \backslash B_R}\frac{w_+^{p-1}(y,t)}{|x-y|^{N+sp}}\,dy\iint_{Q(R,S)}w_+\zeta^p(x,t)\,dxd\tau,
		\end{align}
		where $w_+=(u-k)_+$ with $k\in\mathbb{R}$ and $\gamma>0$ depends only on $N,p,s,\Lambda$.
	\end{proposition}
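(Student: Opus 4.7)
The plan is to substitute $\varphi = w_+\zeta^p$ (more precisely, its Steklov average in time) as a test function in the weak formulation \eqref{1.8}, then pass to the limit in the averaging parameter; I will suppress this regularization in what follows. Each ingredient of \eqref{1.8} — the two boundary-in-time terms, the term $u\partial_t\zeta$, the local $p$-Laplace term, and the nonlocal term $\mathcal{E}(u,\varphi,t)$ — is to be channelled either to the left- or the right-hand side of \eqref{3.1}.

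For the parabolic part, I would combine the three time contributions via integration by parts in $t$ to obtain $\tfrac{1}{2}\int_{B_R}w_+^2\zeta^p\,dx\big|_{t_o-S}^{t}$ on the left (whose essential supremum over $t\in(t_o-S,t_o)$ yields the first two terms of \eqref{3.1}) and the remainder $\tfrac{1}{2}\iint w_+^2\,\partial_t\zeta^p\,dx\,d\tau$, which is absorbed into the right-hand side. For the local diffusion, expanding
\begin{align*}
\nabla(w_+\zeta^p) = \zeta^p\chi_{\{u>k\}}\nabla u + p\zeta^{p-1}w_+\nabla\zeta
\end{align*}
and applying Young's inequality keeps $\tfrac{1}{2}|\nabla w_+|^p\zeta^p$ on the left and produces the term $w_+^p|\nabla\zeta|^p$ on the right.

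The heart of the argument is the nonlocal term $\mathcal{E}(u,w_+\zeta^p,t)$, which I would handle by splitting $\mathbb{R}^N\times\mathbb{R}^N$ into $B_R\times B_R$, the two symmetric cross regions $B_R\times(\mathbb{R}^N\setminus B_R)$ and its transpose, and $(\mathbb{R}^N\setminus B_R)^2$ (which vanishes since $\zeta=0$ there). On $B_R\times B_R$ I invoke the pointwise algebraic inequality
\begin{align*}
&|u(x)-u(y)|^{p-2}(u(x)-u(y))\bigl(w_+(x)\zeta^p(x) - w_+(y)\zeta^p(y)\bigr)\\
&\qquad\geq c_1|w_+(x)-w_+(y)|^p\min\{\zeta(x),\zeta(y)\}^p - c_2\max\{w_+(x),w_+(y)\}^p|\zeta(x)-\zeta(y)|^p,
\end{align*}
and in the mixed subcase $u(x)>k>u(y)$, where $u(x)-u(y)=w_+(x)+w_-(y)$, the cruder bound $(w_+(x)+w_-(y))^{p-1}w_+(x)\zeta^p(x)\geq w_-^{p-1}(y)w_+(x)\zeta^p(x)$ isolates the $w_-$ cross term on the left of \eqref{3.1}. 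On the cross region, where $\zeta(y)=0$, the integrand reduces to $|u(x)-u(y)|^{p-2}(u(x)-u(y))w_+(x)\zeta^p(x)K(x,y,t)$; the part on $\{u(y)\leq k\}$ is nonnegative and can be dropped, while on $\{u(y)>k\}$ the bound $|u(x)-u(y)|^{p-1}\leq 2^{p-1}(w_+(x)^{p-1}+w_+(y)^{p-1})$, combined with \eqref{1.3} and Young's inequality, produces exactly the tail-type contribution on the right of \eqref{3.1} (with the $w_+^{p-1}(x)$ factor absorbed into $w_+(x)\zeta^p(x)$).

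Assembling the four contributions and taking the essential supremum in $t$ on the left gives \eqref{3.1}. I expect the main technical obstacle to be the book-keeping in the nonlocal splitting: specifically, ensuring that the sign structure on $B_R\times B_R$ simultaneously delivers the $\min\{\zeta(x),\zeta(y)\}^p$-weighted Gagliardo seminorm \emph{and} the $w_-^{p-1}$ cross term on the left-hand side, which requires separating the four sign subcases $\{u(x)\gtrless k,\,u(y)\gtrless k\}$ and being careful not to discard the contribution from $\{u(x)>k>u(y)\}$. The remaining manipulations are routine Young-type absorption and use of the two-sided kernel bound \eqref{1.3}.
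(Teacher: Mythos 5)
Your overall strategy --- inserting the Steklov average of $w_+\zeta^p$ into \eqref{1.8}, integrating by parts in time, expanding the local gradient term with Young's inequality, and splitting the nonlocal energy over $B_R\times B_R$, the two cross regions and $(\mathbb{R}^N\setminus B_R)^2$ --- is the one the paper implicitly adopts, since the paper gives no detailed proof of Proposition \ref{lem-3-1} but simply points to Lemma~3.1 of \cite{FSZ21} and estimates~(2.1)--(2.2) of \cite{L22}, which proceed exactly this way. However, two steps in your sketch would not deliver \eqref{3.1} as stated.

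First, the cross region. On $B_R\times(\mathbb{R}^N\setminus B_R)$, restricting to $\{u(x)>k\}$ where $w_+(x)\zeta^p(x)>0$, the integrand is nonnegative when $u(y)\leq k$ or $w_+(y)\leq w_+(x)$, and on the remaining unfavorable set $\{u(y)>u(x)>k\}$ one has $|u(x)-u(y)|^{p-1}=(w_+(y)-w_+(x))^{p-1}\leq w_+^{p-1}(y)$ \emph{directly}, with no constant and no $w_+(x)$ appearing. Replacing this sharp one-sided bound by $2^{p-1}\bigl(w_+^{p-1}(x)+w_+^{p-1}(y)\bigr)$, as you propose, introduces a spurious extra term of the form $\gamma\iint_{Q(R,S)} w_+^p(x,t)\zeta^p(x,t)\bigl(\int_{\mathbb{R}^N\setminus B_R}|x-y|^{-N-sp}\,dy\bigr)\,dx\,d\tau$, which, because $\mathrm{dist}(\mathrm{supp}\,\zeta(\cdot,t),\partial B_R)>0$, is of order $\iint w_+^p\zeta^p\,dx\,d\tau$ times a geometric constant and does \emph{not} appear on the right of \eqref{3.1}; "absorbing $w_+^{p-1}(x)$ into $w_+(x)\zeta^p(x)$'' does not make it vanish, nor can Young's inequality send it into the tail term. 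You must use the one-sided bound.

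Second, a point you yourself flag as the main book-keeping obstacle: on $B_R\times B_R$ in the mixed case $u(x)>k\geq u(y)$ the single positive quantity $(w_+(x)+w_-(y))^{p-1}w_+(x)\zeta^p(x)$ must fund \emph{both} the $\min\{\zeta\}^p$-weighted Gagliardo term (there $|w_+(x)-w_+(y)|^p=w_+^p(x)$) \emph{and} the $w_-^{p-1}(y)w_+(x)\zeta^p(x)$ term of \eqref{3.1}. Applying the generic algebraic inequality on all of $B_R\times B_R$ and then, separately, the bound $(w_+(x)+w_-(y))^{p-1}\geq w_-^{p-1}(y)$ on the mixed set double-uses this quantity. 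One should instead use a single split such as $(w_+(x)+w_-(y))^{p-1}\geq\max\{w_+^{p-1}(x),w_-^{p-1}(y)\}\geq\tfrac12\bigl(w_+^{p-1}(x)+w_-^{p-1}(y)\bigr)$, which delivers a fixed fraction of each at once. With these two repairs the argument yields \eqref{3.1}.
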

	
	\begin{remark}
		The Caccioppoli inequality above can be obtained by using the method of Lemma 3.1 in our previous paper \cite{FSZ21}. However, we follow the estimates (2.1)--(2.2) in \cite{L22} here to get the additional term
		$$
		\iint_{Q(R,S)} w_+\zeta^p(x,t)\int_{B_R} \frac{w_-^{p-1}(y, t)}{|x-y|^{N+sp}}\,dydxd\tau.
		$$
	\end{remark}

	From now on, we always denote $\mathcal{Q}=B_R(x_o)\times(T_1,T_2]\subset E_T$. The number $\mu^\pm$ and $\omega$ satisfy
	\begin{align*}
	\mu^+\geq \operatorname*{ess\,\sup}_{\mathcal{Q}} u, \quad \mu^-\leq \operatorname*{ess\,\inf}_{\mathcal{Q}} u, \quad \omega\geq\mu^+-\mu^-.
	\end{align*}
	
	We now prove a De Giorgi type lemma for weak supersolutions. Notice that the tail will appear in terms of either-or in order to control the nonlocal terms.
	
	\begin{lemma}
		\label{lem-3-2}	
		Let $1<p<\infty$. Suppose that $u$ is a locally bounded, local weak supersolution to \eqref{1.1} in $E_T$. For parameters $\delta,\xi\in(0,1)$, let $\theta=\delta(\xi\omega)^{2-p}$ and $Q_\rho^-(\theta)\subset\mathcal{Q}$. There exists a constant $\nu\in(0,1)$ depending on $N,p,s,\Lambda$ and $\delta$ such that if
		\begin{align}
		\left|[u-\mu_-\leq \xi\omega] \cap Q_\rho^-(\theta)\right| \leq \nu|Q_\rho^-(\theta)|,
		\end{align}
		then either
		\begin{align*}
		\left(\frac{\rho}{R}\right)^{\frac{p}{p-1}} \mathrm{Tail}\left((u-\mu^-)_-;x
		_o,R;T_1,T_2\right)>\xi\omega,
		\end{align*}
		or
		\begin{align*}
		u-\mu^-\geq \frac{1}{2}\xi\omega\quad\text {a.e. in } Q_{\frac{1}{2}\rho}^{-}(\theta).
		\end{align*}
	\end{lemma}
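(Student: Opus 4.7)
The plan is a classical De Giorgi iteration on shrinking cylinders, in which the nonlocal tail appearing in the Caccioppoli inequality of Proposition~\ref{lem-3-1} is absorbed by means of the either--or alternative. Assume the first alternative fails, i.e.
$$\left(\frac{\rho}{R}\right)^{\frac{p}{p-1}}\mathrm{Tail}((u-\mu^-)_-;x_o,R;T_1,T_2)\leq\xi\omega.$$
Introduce the sequences $\rho_n:=\rho/2+\rho/2^{n+1}$ and $k_n:=\mu^-+\xi\omega/2+\xi\omega/2^{n+1}$, the cylinders $Q_n:=B_{\rho_n}(x_o)\times(t_o-\theta\rho_n^p,t_o]$, and piecewise smooth cutoffs $\zeta_n$ supported in $Q_n$, identically $1$ on $Q_{n+1}$, with $|\nabla\zeta_n|\leq C\,2^n/\rho$ and $|\partial_t\zeta_n^p|\leq C\,2^{np}/(\theta\rho^p)$. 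Set $w_n:=(u-k_n)_-$ and $A_n:=\{(x,t)\in Q_n:u(x,t)<k_n\}$; since $u\geq\mu^-$ a.e.\ on $\mathcal{Q}\supset Q_n$, one has the pointwise bound $w_n\leq k_n-\mu^-\leq\xi\omega$ on $\mathcal{Q}$.

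Apply Proposition~\ref{lem-3-1} in the supersolution form (obtained by the substitution $v=-u$) on $Q_n$ with cutoff $\zeta_n$. Since $\theta=\delta(\xi\omega)^{2-p}$, the local gradient and parabolic right-hand side contributions combine to $C\,2^{np}\delta^{-1}(\xi\omega)^p|A_n|/\rho^p$, and the $|\zeta_n(x)-\zeta_n(y)|$-type nonlocal term is of the same order after splitting $|x-y|\lesssim\rho/2^n$ and $|x-y|\gtrsim\rho/2^n$. For the global tail
$$\sup_{x\in\mathrm{supp}\,\zeta_n}\int_{\mathbb{R}^N\setminus B_{\rho_n}}\frac{w_n^{p-1}(y,t)}{|x-y|^{N+sp}}\,dy,$$
decompose $\mathbb{R}^N\setminus B_{\rho_n}=(B_R\setminus B_{\rho_n})\cup(\mathbb{R}^N\setminus B_R)$. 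On the near piece use $w_n\leq\xi\omega$ together with $|x-y|\geq c\,\rho/2^n$ to bound it by $C\,2^{snp}(\xi\omega)^{p-1}/\rho^{sp}$. On the far piece use the pointwise inequality $(u-k_n)_-\leq\xi\omega+(u-\mu^-)_-$ and the geometric comparison $|y-x_o|\leq C_\ast|y-x|$ to convert the integral into $C\,\mathrm{Tail}((u-\mu^-)_-;x_o,R;T_1,T_2)^{p-1}/R^p+C(\xi\omega)^{p-1}/R^{sp}$, and then invoke the negation of the first alternative to obtain an overall bound of order $(\xi\omega)^{p-1}/\rho^p$. Multiplied by $\iint w_n\zeta_n^p\leq\xi\omega|A_n|$ the tail contribution is again of order $C\,2^{np}\delta^{-1}(\xi\omega)^p|A_n|/\rho^p$.

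Collecting the estimates yields
$$\operatorname*{ess\,sup}_{t}\int_{B_{\rho_{n+1}}(x_o)}w_n^2(x,t)\,dx+\iint_{Q_{n+1}}|\nabla w_n|^p\,dx\,dt\leq\frac{C\,2^{np}(\xi\omega)^p}{\delta\rho^p}|A_n|.$$
Applying Lemma~\ref{lem-2-1} to $w_n\zeta_n$ on $Q_n$, together with the elementary bound $w_n\geq k_n-k_{n+1}=\xi\omega/2^{n+2}$ on $A_{n+1}$, delivers (after cancelation of the factor $(\xi\omega)^p$ on both sides, made possible by the choice $\theta=\delta(\xi\omega)^{2-p}$) a recursive inequality
$$Y_{n+1}\leq C\,b^n\,Y_n^{1+\kappa},\qquad Y_n:=\frac{|A_n|}{|Q_n|},$$
with $C,b>1$ and $\kappa>0$ depending on $N,p,s,\Lambda,\delta$. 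Lemma~\ref{lem-2-2} then produces $\nu\in(0,1)$ (with the same dependencies) such that $Y_0\leq\nu$ forces $Y_n\to 0$, which is precisely $u\geq\mu^-+\xi\omega/2$ a.e.\ on $Q_{\rho/2}^-(\theta)$. The main obstacle is the delicate tail estimate: the exponent $p/(p-1)$ in the either--or hypothesis is the unique scaling that makes the tail contribution scale like $\rho^{-p}(\xi\omega)^{p-1}$, matching the local terms and allowing the intrinsic cancelation of $\xi\omega$ in the final iteration, which is what makes the argument work uniformly in both the singular ($1<p<2$) and degenerate ($p>2$) regimes.
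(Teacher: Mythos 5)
Your proposal is correct and follows essentially the same route as the paper's proof: a De Giorgi iteration on shrinking cylinders $Q_n$ with levels $k_n$ halving down to $\mu^-+\xi\omega/2$, the Caccioppoli estimate of Proposition~\ref{lem-3-1}, the observation that $(u-k_n)_-\leq\xi\omega$ on $\mathcal{Q}$, the decomposition of the nonlocal tail into a near annulus $B_R\setminus B_{\rho_n}$ (controlled by $\xi\omega$ and the geometric bound $|x-y|\gtrsim 2^{-n}\rho$) and a far piece $\mathbb{R}^N\setminus B_R$ (controlled by the either--or hypothesis via the comparison $|y-x_o|\lesssim 2^n|y-x|$), Lemma~\ref{lem-2-1} and Lemma~\ref{lem-2-2}. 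The intrinsic cancellation of $(\xi\omega)^p$ enabled by $\theta=\delta(\xi\omega)^{2-p}$, which you single out, is precisely the mechanism used in the paper.
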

	\begin{proof}
		We can assume $(x_o,t_o)=(0,0)$ and $\mu^-=0$ . Let
		\begin{align*}
		\rho_j=\frac{\rho}{2}+\frac{\rho}{2^j},\quad \tilde{\rho}_j=\frac{\rho_j+\rho_{j+1}}{2},\quad B_j=B_{\rho_j},\quad \widetilde{B}_j=B_{\tilde{\rho}_j},\quad  j=0,1,2\ldots.
		\end{align*}
		Denote the cylinders
		\begin{align*}
		Q_j=B_j \times\left(-\theta \rho_j^p, 0\right],\quad \widetilde{Q}_j=\widetilde{B}_j \times\left(-\theta \tilde{\rho}_j^p, 0\right],\quad j=0,1,2\ldots.
		\end{align*}
		Let the level
		\begin{align*}
		k_j=\frac{\xi\omega}{2}+\frac{\xi\omega}{2^{j+1}}.
		\end{align*}
		
		We apply the energy estimate \eqref{3.1} for function $(u-k_j)_-$ over the cylinder $Q_j$. Choose the cutoff function $\zeta(x,t)=\zeta_1(x)\zeta_2(t)$ in $Q_j$ satisfying that
		\begin{align*}
		0\leq \zeta_1(x)\leq 1,\quad |\nabla\zeta_1(x)|\leq\frac{2^{j+1}}{\rho},\quad \zeta_1(x)=1 \text{ in } B_{j+1},\quad \zeta_1(x)=0 \text{ in } \mathbb{R}^N\backslash\widetilde{B}_j
		\end{align*}
		and
		\begin{align*}
		0\leq\zeta_2(t)\leq 1,\quad 0\leq \zeta_{2,t}\leq\frac{2^{p(j+1)}}{\theta\rho^p},\quad \zeta_2(t)=1 \text{ for } t\geq-\theta\rho_{j+1}^p,\quad \zeta_2(t)=0 \text{ for } t<-\theta\tilde{\rho}_j^p.
		\end{align*}
		Then it yields that
		\begin{align*}
		&\operatorname*{ess\,\sup}_{-\theta\rho^p_j<t<0}\int_{B_j} (u-k_j)_-^2\zeta^p(x,t)\,dx+\iint_{Q_j}|\nabla(u-k_j)_-\zeta(x,t)|^p\,dxd\tau\nonumber\\
		&\quad+\int_{-\theta\rho_j^p}^0\int_{B_j}\int_{B_j}|(u-k_j)_-\zeta(x,t)-(u-k_j)_-\zeta(y,t)|^p\,d\mu d\tau\nonumber\\
		&\leq \gamma\frac{2^{jp}}{\rho^p}\iint_{Q_j}(u-k_j)_-^p\,dxd\tau+\gamma\frac{2^{jp}}{\theta\rho^p} \iint_{Q_j}(u-k_j)_-^2\,dxd\tau\\
		&\quad+\gamma\int_{-\theta\rho_j^p}^0\int_{B_j}\int_{B_j}(\max\{(u-k_j)_-(x,t),(u-k_j)_-(y,t)\})^p|\zeta(x,t)-\zeta(y,t)|^p\,d\mu d\tau\nonumber\\
		&\quad+\gamma \mathop{\mathrm{ess}\,\sup}_{\stackrel{-\theta \rho_j^p<t<0}{(x,t)\in \mathrm{supp}\,\zeta}}\int_{\mathbb{R}^N \backslash B_j}\frac{(u-k_j)_-^{p-1}(y,t)}{|x-y|^{N+sp}}\,dy\iint_{Q_j}(u-k_j)_-\zeta^p(x,t)\,dxd\tau\nonumber\\
		&=: I_1+I_2+I_3+I_4.
		\end{align*}
		
		Using the definition of $k_j$, we estimate $I_1$ as
		\begin{align*}
		I_1 &\leq \gamma\frac{2^{jp}}{\rho^p} \iint_{Q_j}\left(u-k_j\right)_-^p\chi_{\{u(x,t)<k_j\}}\,dxd\tau\\
		&\leq \gamma\frac{2^{jp}}{\rho^p}(\xi\omega)^p\left|[u<k_j]\cap Q_j\right|.
		\end{align*}
		For the terms $I_2$ and $I_3$, it holds that
		\begin{align*}
		I_2\leq \gamma\frac{2^{jp}}{\theta\rho^p}(\xi\omega)^2\left|[u<k_j]\cap Q_j\right|
		\end{align*}
		and
		\begin{align*}
		I_3\leq \gamma\frac{2^{jp}}{\rho^p} (\xi\omega)^p\left|[u<k_j]\cap Q_j\right|.
		\end{align*}
		For every $x\in \mathrm{supp}\, \zeta_1$ and $y\in\mathbb{R}^N\backslash B_j$, notice that
		\begin{align*}
		\frac{1}{|x-y|}=\frac{1}{|y|}\frac{|x-(x-y)|}{|x-y|}\leq\frac{1+2^{j+3}}{|y|}\leq\frac{2^{j+4}}{|y|}.
		\end{align*}
		If we enforce
		\begin{align*}
		\left(\frac{\rho}{R}\right)^{\frac{p}{p-1}} \mathrm{Tail}\left(u_-;R;T_1,T_2\right)\leq\xi\omega,
		\end{align*}
		the term $I_4$ can be estimated as
		\begin{align*}
		I_4&\leq \gamma 2^{j(N+sp)}\xi\omega|[u<k_j]\cap Q_j|\left(\gamma 2^{spj}\frac{(\xi\omega)^{p-1}}{\rho^{sp}}+\mathop{\mathrm{ess}\,\sup}_{-\theta \rho_j^p<t<0}\int_{\mathbb{R}^N \backslash B_R}\frac{u_-^{p-1}(y,t)}{|y|^{N+sp}}\,dy\right)\\
		&\leq\gamma 2^{j(N+sp)}\frac{\xi\omega}{\rho^p}|[u<k_j]\cap Q_j|\left(\gamma 2^{spj}(\xi\omega)^{p-1}+\left(\frac{\rho}{R}\right)^p[\mathrm{Tail}(u_-;R;T_1,T_2)]^{p-1}\right)\\
		&\leq\gamma 2^{j(N+2sp)}\frac{(\xi\omega)^p}{\rho^p}|[u<k_j]\cap Q_j|.
		\end{align*}
		
		Combining the estimates $I_1$--$I_4$ and discarding the positive term on the left-hand side yields that
		\begin{align*}
		&\quad \operatorname*{ess\,\sup}_{-\theta\rho^p_{j+1}<t<0}\int_{B_{j+1}} (u-k_j)_-^2\,dx+\iint_{Q_{j+1}}|\nabla(u-k_j)_-|^p\,dxd\tau\\
		&\leq  \gamma\frac{2^{j(N+2sp+p)}}{\delta\rho^p}(\xi\omega)^p|[u<k_j]\cap Q_j|.
		\end{align*}
		According to the Sobolev inequality in Lemma \ref{lem-2-1}, it follows that
		\begin{align*}
		&\quad \iint_{Q_{j+1}}[\left(u-k_j\right)_-]^{p\frac{N+2}{N}}\,dxd\tau \\
		&\leq \gamma \iint_{Q_{j+1}}\left|\nabla[(u-k_j)_-]\right|^p\,dxd\tau \times\left(\operatorname*{ess\,\sup}_{-\theta\rho^p_j<t<0} \int_{B_{j+1}}[(u-k_j)_-]^2\,dx\right)^{\frac{p}{N}} \\
		&\leq \gamma\left[\frac{2^{j(N+2sp+p)}}{\delta\rho^p}(\xi\omega)^p\right]^{\frac{N+p}{N}}|[u<k_j]\cap Q_j|^{\frac{N+p}{N}}.
		\end{align*}
		On the other hand,
		\begin{align*}
		\iint_{Q_{j+1}}[(u-k_j)_-]^{p\frac{N+2}{N}}\,dxd\tau\geq\left(\frac{\xi\omega}{2^{j+2}}\right)^{p\frac{N+2}{N}}\left|\left[u<k_{j+1}\right] \cap Q_{j+1}\right|.
		\end{align*}

		Define
		\begin{align*}
		Y_j=\frac{\left|[u<k_j] \cap Q_j\right|}{\left|Q_j\right|}.
		\end{align*}
		Thus we obtain
		\begin{align*}
		Y_{j+1} \leq \gamma b^j\delta^{-1} Y_j^{1+\frac{p}{N}},
		\end{align*}
		where
		\begin{align*}
		b=2^{\frac{N+p}{N}(N+p+2sp)+p\frac{N+2}{N}}.
		\end{align*}
		From Lemma \ref{lem-2-2}, there holds that $Y_j\rightarrow0$ as $j\rightarrow\infty$ if
		\begin{align*}
		Y_o\leq \gamma^{-1} b^{-\frac{N^2}{p^2}}\delta^{-\frac{N}{p}} \stackrel{\text { def }}{=} \nu,
		\end{align*}
		which coincides with the assumption \eqref{3.1}.
	\end{proof}
	
	Next, we consider Lemma \ref{lem-3-2} on the cylinder $Q_{\rho}^+(\theta)$ and give the additional information on initial data.
	\begin{lemma}
		\label{lem-3-3}
		Assume that $u$ is a locally bounded, local weak supersolution to \eqref{1.1} in $E_T$. Let $p\in(1,\infty)$, and the parameter $\xi\in(0,1)$. There exists a constant $\nu_o$ only depending on $N,p,s,\Lambda$ such that if
		\begin{align*}
		u(\cdot,t_o)-\mu^-\geq\xi\omega \quad\text {a.e. in } B_\rho(x_o),
		\end{align*}
		then either
		\begin{align*}
		\left(\frac{\rho}{R}\right)^{\frac{p}{p-1}} \mathrm{Tail}\left((u-\mu^-)_-;x_o,R;T_1,T_2\right)>\xi\omega,
		\end{align*}
		or
		\begin{align*}
		u-\mu^-\geq\frac{1}{2}\xi\omega \quad\text {a.e. in } B_{\frac{1}{2}\rho}(x_o)\times(t_o,t_o+\nu_o(\xi\omega)^{2-p}\rho^p]
		\end{align*}
		with the cylinders being included in $\mathcal{Q}$.
		\begin{proof}
			Without loss of generality, let $(x_o,t_o)=(0,0)$ and $\mu^-=0$. We apply the energy estimate \eqref{3.1} on the cylinder $Q_\rho^+(\theta)$ for some $\theta$ will be determined later. Let $\rho_j,\tilde{\rho}_j,B_j,\widetilde{B}_j$ and $k_j$ define as Lemma \ref{lem-3-2}. Denote
			\begin{align*}
			Q_j=B_j\times(0,\theta\rho_j^p),\quad \widetilde{Q}_j=B_j\times(0,\theta\tilde{\rho}_j^p).
			\end{align*}
			Choose the piecewise smooth cutoff function $\zeta(x)$ independent of time which satisfies
			\begin{align*}
			|\nabla \zeta(x)|\leq\frac{2^{j+1}}{\rho}, \quad \zeta(x)=1 \text{ in } B_{j+1}, \quad \zeta(x)=0 \text{ in } \mathbb{R}^N\backslash\widetilde{B}_j.
			\end{align*}
			As a result, we have
			\begin{align}
			\label{3.3}
			& \quad \operatorname*{ess\,\sup}_{0<t<\theta\rho^p_{j+1}}\int_{B_{j+1}} (u-k_j)_-^2\,dx+\iint_{Q_{j+1}}|\nabla(u-k_j)_-|^p\,dxd\tau\nonumber\\
			&\leq \gamma\frac{2^{jp}}{\rho^p}\iint_{Q_j}(u-k_j)_-^p\,dxd\tau\nonumber\\
			&\quad+\gamma\int_0^{\theta\rho_j^p}\int_{B_j}\int_{B_j}(\max\{(u-k_j)_-(x,t),(u-k_j)_-(y,t)\})^p|\zeta(x,t)-\zeta(y,t)|^p\,d\mu d\tau\nonumber\\
			&\quad+\gamma \mathop{\mathrm{ess}\,\sup}_{\stackrel{0<t<\theta \rho_j^p}{x\in \mathrm{supp}\,\zeta}}\int_{\mathbb{R}^N \backslash B_j}\frac{(u-k_j)_-^{p-1}(y,t)}{|x-y|^{N+sp}}\,dy\iint_{Q_j}(u-k_j)_-\zeta^p(x,t)\,dxd\tau.
			\end{align}
			The right-hand side of \eqref{3.3} can be estimated as Lemma \ref{lem-3-2}, if we enforce
			\begin{align*}
			\left(\frac{\rho}{R}\right)^{\frac{p}{p-1}} \mathrm{Tail}\left(u_-;R;T_1,T_2\right)\leq\xi\omega,
			\end{align*}
			it follows that
			\begin{align*}
			&\operatorname*{ess\,\sup}_{0<t<\theta\rho^p_{j+1}}\int_{B_{j+1}} (u-k_j)_-^2\,dx+\iint_{Q_{j+1}}|\nabla(u-k_j)_-|^p\,dxd\tau\\
			\leq&\gamma\frac{2^{j(N+2sp+p)}}{\rho^p}(\xi\omega)^p|[u<k_j]\cap Q_j|.
			\end{align*}
			
			Let
			\begin{align*}
			Y_j=\frac{\left|[u<k_j] \cap Q_j\right|}{\left|Q_j\right|}.
			\end{align*}
			Following the same step of Lemma \ref{lem-3-2}, we can derive the recursive inequality
			\begin{align*}
			Y_{j+1}\leq\gamma b^j\left[\frac{\theta}{(\xi\omega)^{2-p}}\right]^{\frac{p}{N}} Y_j^{1+\frac{p}{N}},
			\end{align*}
			where $\gamma$ and $b$ depend only on $N,p,s,\Lambda$. Utilizing Lemma \ref{lem-2-2}, there exists a constant $\nu_o$ depending only on $N,p,s,\Lambda$ such that if
			\begin{align*}
			Y_o\leq \frac{\nu_o(\xi\omega)^{2-p}}{\theta},
			\end{align*}
			then $Y_j\rightarrow 0$. Finally, we choose $\theta=\nu_o(\xi\omega)^{2-p}$ to reach the conclusion.	
		\end{proof}
	\end{lemma}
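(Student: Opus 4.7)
The plan is to mirror the De Giorgi iteration of Lemma \ref{lem-3-2} on the forward cylinder $Q_\rho^+(\theta)$, but with a time-independent cutoff, so that the hypothesis on the initial slice plays the role of the measure-theoretic smallness assumption. After reducing by translation to $(x_o,t_o)=(0,0)$ and $\mu^-=0$, I would set up the same decreasing sequences $\rho_j=\rho/2+\rho/2^j$, levels $k_j=\xi\omega/2+\xi\omega/2^{j+1}$, and forward cylinders $Q_j=B_{\rho_j}\times(0,\theta\rho_j^p]$, together with the iterative quantity $Y_j=|[u<k_j]\cap Q_j|/|Q_j|$.

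The first key step is to apply Proposition \ref{lem-3-1} to $(u-k_j)_-$ using a cutoff $\zeta(x)$ that is independent of $t$, supported in $\widetilde{B}_j$, equal to $1$ on $B_{j+1}$, and satisfies $|\nabla\zeta|\le 2^{j+1}/\rho$. Two simplifications over Lemma \ref{lem-3-2} occur: the $w_+^2|\partial_t\zeta^p|$ term disappears because $\zeta$ is stationary, and the initial contribution $\int_{B_R}(u-k_j)_-^2(x,0)\zeta^p(x)\,dx$ vanishes because the hypothesis $u(\cdot,0)\ge\xi\omega\ge k_j$ yields $(u-k_j)_-(\cdot,0)\equiv 0$ on $B_\rho$. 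The gradient term and the three nonlocal contributions on the right-hand side are controlled exactly as in Lemma \ref{lem-3-2}, and the tail is absorbed by enforcing the first alternative $(\rho/R)^{p/(p-1)}\mathrm{Tail}(u_-;R;T_1,T_2)\le\xi\omega$.

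Combining the resulting energy bound with the parabolic Sobolev embedding of Lemma \ref{lem-2-1}, together with the standard lower bound $\iint_{Q_{j+1}}(u-k_j)_-^{p(N+2)/N}\ge(\xi\omega/2^{j+2})^{p(N+2)/N}|[u<k_{j+1}]\cap Q_{j+1}|$, leads to a recursion of the form $Y_{j+1}\le\gamma b^j[\theta/(\xi\omega)^{2-p}]^{p/N}Y_j^{1+p/N}$, with $\gamma,b>1$ depending only on $N,p,s,\Lambda$. By Lemma \ref{lem-2-2} this drives $Y_j\to 0$ as soon as $Y_0\le\nu_o(\xi\omega)^{2-p}/\theta$ for a suitable $\nu_o(N,p,s,\Lambda)$.

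The main difficulty I anticipate is the correct calibration of $\theta$: unlike in Lemma \ref{lem-3-2}, no measure-smallness hypothesis is available to start the iteration, so only the trivial bound $Y_0\le 1$ can be used. The resolution is to choose $\theta=\nu_o(\xi\omega)^{2-p}$, which makes the starting condition automatic and simultaneously produces the intrinsic time scale $\nu_o(\xi\omega)^{2-p}\rho^p$ appearing in the conclusion; the $(\xi\omega)^{2-p}$ factor is precisely what balances the $(\xi\omega)^p$ from the Caccioppoli right-hand side against the $(\xi\omega)^2$ coming from the $L^\infty_tL^2_x$ contribution in the Sobolev inequality.
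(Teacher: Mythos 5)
Your proposal is correct and follows essentially the same route as the paper: forward De Giorgi iteration with a time-independent cutoff so the initial term drops out via $u(\cdot,0)\ge\xi\omega\ge k_j$, leading to the recursion $Y_{j+1}\le\gamma b^j[\theta/(\xi\omega)^{2-p}]^{p/N}Y_j^{1+p/N}$, and then choosing $\theta=\nu_o(\xi\omega)^{2-p}$ so that the starting condition $Y_0\le\nu_o(\xi\omega)^{2-p}/\theta=1$ is automatic. Your remark on why $\theta$ must be calibrated this way (no measure-smallness hypothesis, only $Y_0\le1$) is exactly the point the paper exploits.
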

	
	The expansion of positivity about the time variable will be given in the next lemma. The main idea of which is extending the measure information of positivity at some time level on the ball $B_\rho(x_o)$ to later time.
	
	\begin{lemma}
		\label{lem-3-4}
		Suppose that $1<p<\infty$ and $u$ is a locally bounded, local weak supersolution to \eqref{1.1} in $E_T$. Let $\xi,\alpha\in(0,1)$. There exist $\delta$ and $\epsilon$ in $(0,1)$ that depend on $N,p,s,\Lambda$ and $\alpha$, such that if
		\begin{align}
		\label{3.4}
		\left|[u(\cdot,t_o)-\mu^- \geq\xi\omega] \cap B_\rho(x_o)\right| \geq \alpha\left|B_\rho(x_o)\right|,
		\end{align}
		then either
		\begin{align*}
		\left(\frac{\rho}{R}\right)^{\frac{p}{p-1}} \mathrm{Tail}\left((u-\mu^-)_-;x_o,R;T_1,T_2\right)>\xi\omega,
		\end{align*}
		or
		\begin{align*}
		%\label{3.5}
		\left|[u(\cdot,t)-\mu^-\geq\epsilon \xi\omega] \cap B_\rho(x_o)\right| \geq \frac{\alpha}{2} \left|B_\rho(x_o)\right| \quad \text { for all } t\in\left(t_o,t_o+\delta(\xi\omega)^{2-p}\rho^p\right]
		\end{align*}
		with the cylinder being included in $\mathcal{Q}$.
		\begin{proof}
			We may assume $(x_o,t_o)=(0,0)$ and $\mu^-=0$. For $k>0$ and $t>0$, define
			\begin{align*}
			A_{k,\rho}(t)=[u(\cdot,t)<k] \cap B_{\rho}.
			\end{align*}
			Consider the energy estimate \eqref{3.1} for functions $(u-\xi\omega)_-$ over the cylinder $Q_\rho^+(\theta)$. Let the cutoff function $\zeta\in C_o^\infty\left(B_{\frac{\rho(2-\sigma)}{2}}\right)$ independent of $t$ satisfy $\zeta(x)=1$ in $B_{(1-\sigma)\rho}$ and $|\nabla\zeta|\leq\frac{1}{\sigma\rho}$, where $\sigma\in(0,1]$ will be chosen later. Enforcing
			\begin{align*}
			\left(\frac{\rho}{R}\right)^{\frac{p}{p-1}} \mathrm{Tail}\left(u_-;R,T_1,T_2\right)\leq\xi\omega,
			\end{align*}
			then we can deduce that
			\begin{align}
			\label{3.6}
			&\quad\int_{B_{(1-\sigma) \rho}}(u-\xi\omega)_-^2(x,t)\,dx \nonumber \\
			&\leq\int_{B_\rho}(u-\xi\omega)_-^2(x,0)\,dx
			+\frac{\gamma}{(\sigma\rho)^p} \int_{0}^{\theta \rho^p}\int_{B_\rho}(u-\xi\omega)_-^p\,dxd\tau\nonumber \\
			&\qquad+\gamma\int_{0}^{\theta \rho^p}\int_{B_\rho}\int_{B_\rho}(\max\{(u-\xi\omega)_-(x,t),(u-\xi\omega)_-(y,t)\})^p|\zeta(x)-\zeta(y)|^p\,d\mu d\tau\nonumber\\
			&\qquad+\gamma \mathop{\mathrm{ess}\,\sup}_{\stackrel{0<t<\theta \rho^p}{x \in \mathrm{supp}\,\zeta}}\int_{\mathbb{R}^N \backslash B_\rho}\frac{(u-\xi\omega)_-^{p-1}(y,t)}{|x-y|^{N+sp}}\,dy\int_{0}^{\theta \rho^p}\int_{B_\rho}(u-\xi\omega)_-\zeta^p(x)\,dxd\tau\nonumber\\
			&\leq \left[ (\xi\omega)^2(1-\alpha)+\gamma \frac{\theta (\xi\omega)^p}{\sigma^p}+\gamma\frac{\theta (\xi\omega)^p}{\sigma^{N+sp}}\right]\left|B_{\rho}\right|\nonumber \\
			&\leq (\xi\omega)^2\left[(1-\alpha)+C \frac{\theta (\xi\omega)^{p-2}}{\sigma^{N+p}}\right]\left|B_{\rho}\right|,
			\end{align}
			where in the second line from bottom of \eqref{3.6} we estimated it as Lemma \ref{lem-3-2} and used the assumption \eqref{3.4}. On the other hand, the term on the left-hand is evaluated as
			\begin{align}
			\label{3.7}
			\int_{B_{(1-\sigma) \rho}}(u-\xi\omega)_-^2(x,t)\,dx &\geq\int_{B_{(1-\sigma)\rho} \cap[u<\epsilon\xi\omega]}(u-\xi\omega)_-^2(x,t)\,dx\nonumber\\
			& \geq (\xi\omega)^2(1-\epsilon)^2\left|A_{\epsilon\xi\omega,(1-\sigma)\rho}(t)\right|,
			\end{align}
			where $\epsilon\in(0,1)$ will be chosen later.
			It is easy to check that
			\begin{align}
			\label{3.8}
			\left|A_{\epsilon \xi\omega,\rho}(t)\right| &=\left|A_{\epsilon \xi\omega,(1-\sigma) \rho}(t) \cup\left(A_{\epsilon \xi\omega, \rho}(t)-A_{\epsilon \xi\omega,(1-\sigma) \rho}(t)\right)\right|\nonumber \\
			& \leq\left|A_{\epsilon \xi\omega,(1-\sigma) \rho}(t)\right|+\left|B_{\rho}-B_{(1-\sigma) \rho}\right| \nonumber\\
			& \leq\left|A_{\epsilon \xi\omega,(1-\sigma) \rho}(t)\right|+N \sigma\left|B_{\rho}\right|.
			\end{align}
			In light of \eqref{3.6}--\eqref{3.8}, we have
			\begin{align*}
			\left|A_{\epsilon \xi\omega,\rho}(t)\right|&\leq\frac{1}{(\xi\omega)^2(1-\epsilon)^2} \int_{B_{(1-\sigma) \rho}}(u-\xi\omega)^2_-(x,t)\,dx+N\sigma\left|B_\rho\right| \\
			&\leq\frac{1}{(1-\epsilon)^2}\left[(1-\alpha)+C\frac{\theta (\xi\omega)^{p-2}}{\sigma^{N+p}} +N \sigma\right]\left|B_{\rho}\right|.
			\end{align*}
			
			Finally, we can choose  $\theta=\delta (\xi\omega)^{2-p}$ and
			\begin{align*}
			\sigma=\frac{\alpha}{8N}, \quad \epsilon \leq 1-\frac{\sqrt{1-\frac{3}{4} \alpha}}{\sqrt{1-\frac{1}{2} \alpha}},  \quad \delta=\frac{\alpha^{N+p+1}}{C2^{3(p+N)} N^{N+p}},
			\end{align*}
			such that
			\begin{align*}
			\left|A_{\epsilon \xi\omega,\rho}(t)\right|\leq\left(1-\frac{1}{2}\alpha\right)|B_\rho|,
			\end{align*}
			which implies the desired result.
		\end{proof}
	\end{lemma}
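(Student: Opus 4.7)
The plan is to apply the Caccioppoli inequality of Proposition~\ref{lem-3-1} to the truncation $(u-\xi\omega)_-$ on the forward cylinder $Q_\rho^+(\theta)$ with $\theta = \delta(\xi\omega)^{2-p}$ and $\delta\in(0,1)$ to be chosen at the end. By translation we may reduce to $(x_o,t_o)=(0,0)$ and $\mu^-=0$, and we work in the complementary regime where the stated tail alternative fails, that is, $(\rho/R)^{p/(p-1)}\mathrm{Tail}(u_-;R;T_1,T_2)\leq \xi\omega$. The cutoff I would pick is time-independent: $\zeta\in C_c^\infty(B_{\rho(2-\sigma)/2})$ with $\zeta\equiv 1$ on $B_{(1-\sigma)\rho}$ and $|\nabla\zeta|\leq 1/(\sigma\rho)$, where $\sigma\in(0,1]$ is a small parameter to be tuned later. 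With this choice the $\partial_t\zeta^p$ term vanishes and the initial trace at $t=0$ is controlled directly by the hypothesis \eqref{3.4}, giving $\int_{B_\rho}(u-\xi\omega)_-^2(x,0)\,dx\leq (\xi\omega)^2(1-\alpha)|B_\rho|$.

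Next I would discard the gradient term and the positive nonlocal term on the left of \eqref{3.1}, retaining only $\operatorname*{ess\,sup}_t\int_{B_{(1-\sigma)\rho}}(u-\xi\omega)_-^2(x,t)\,dx$. On the right, the bulk contributions coming from $|\nabla\zeta|^p$ and from the cutoff differences $|\zeta(x)-\zeta(y)|^p/|x-y|^{N+sp}$ are bounded by $C\theta(\xi\omega)^p|B_\rho|/\sigma^p$ and $C\theta(\xi\omega)^p|B_\rho|/\sigma^{N+sp}$ respectively, using the trivial pointwise bound $(u-\xi\omega)_-\leq \xi\omega$. For the nonlocal tail, the elementary inequality $|x-y|\geq c\sigma|y|$ valid for $x\in\mathrm{supp}\,\zeta$ and $y\in\mathbb{R}^N\setminus B_\rho$ allows me to replace the integral on $\mathbb{R}^N\setminus B_\rho$ by the tail of $u_-$ on $\mathbb{R}^N\setminus B_R$; invoking the alternative tail hypothesis then produces a factor comparable to $(\xi\omega)^p/\rho^p$. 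Collecting everything yields
\[
\int_{B_{(1-\sigma)\rho}}(u-\xi\omega)_-^2(x,t)\,dx\leq (\xi\omega)^2\Bigl[(1-\alpha)+\frac{C\theta(\xi\omega)^{p-2}}{\sigma^{N+p}}\Bigr]|B_\rho|.
\]

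To convert this energy bound into a measure statement I would restrict the integration on the left to the subset $\{u(\cdot,t)<\epsilon\xi\omega\}$, producing the lower bound $(\xi\omega)^2(1-\epsilon)^2|\{u(\cdot,t)<\epsilon\xi\omega\}\cap B_{(1-\sigma)\rho}|$. Combining with the elementary inclusion $|\{u<\epsilon\xi\omega\}\cap B_\rho|\leq |\{u<\epsilon\xi\omega\}\cap B_{(1-\sigma)\rho}|+N\sigma|B_\rho|$, one obtains a bound on the full measure in $B_\rho$. I would finally calibrate parameters in the order: first $\sigma\sim \alpha/N$ to kill the boundary strip; then $\epsilon$ close to $0$ so that the $(1-\epsilon)^{-2}$ factor does not destroy the inequality; and finally $\delta$ small enough so that $C\delta/\sigma^{N+p}\leq \alpha/8$. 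The upshot is $|\{u(\cdot,t)<\epsilon\xi\omega\}\cap B_\rho|\leq (1-\alpha/2)|B_\rho|$ for all $t\in(0,\delta(\xi\omega)^{2-p}\rho^p]$, which is the desired conclusion.

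The main obstacle I expect is the tail term: the alternative tail hypothesis must yield a bound of the same order as the local contribution $C\theta(\xi\omega)^p/(\sigma^p\rho^p)$, and it is precisely the intrinsic scaling $\theta=\delta(\xi\omega)^{2-p}$ that makes the product $\theta(\xi\omega)^{p-2}$ reduce to the pure constant $\delta$, independent of $\xi\omega$. This is what permits a uniform argument across $p\in(1,\infty)$; in particular the singular case $1<p<2$ requires no special treatment because the left side of the Caccioppoli estimate carries the $L^2$ weight $(\xi\omega)^2$ regardless of $p$, which matches the sharpness needed to absorb the nonlinear terms on the right.
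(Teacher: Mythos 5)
Your proposal follows the paper's proof essentially step by step: the same forward cylinder $Q_\rho^+(\theta)$ with intrinsic scaling $\theta=\delta(\xi\omega)^{2-p}$, the same time-independent cutoff so the $\partial_t\zeta^p$ term vanishes and the initial trace is controlled by \eqref{3.4}, the same discarding of the gradient and positive nonlocal terms on the left of the Caccioppoli estimate, the same restriction to $\{u<\epsilon\xi\omega\}$ combined with the annulus inclusion $|A_{\epsilon\xi\omega,\rho}(t)|\le|A_{\epsilon\xi\omega,(1-\sigma)\rho}(t)|+N\sigma|B_\rho|$, and the same order of parameter calibration ($\sigma$, then $\epsilon$, then $\delta$). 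Your remark that the intrinsic choice of $\theta$ collapses $\theta(\xi\omega)^{p-2}$ to $\delta$ is exactly the mechanism the paper exploits, and your constraint $C\delta/\sigma^{N+p}\le\alpha/8$ is the correct calibration.
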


	The following lemma is about measure shrinking that involves the condition in each slice of time.
	
	\begin{lemma}
		\label{lem-3-5}
		Assume that $1<p<\infty$ and $u$ is a locally bounded, local weak supersolution to \eqref{1.1} in $E_T$. There exist some $\delta,\sigma$ and $\xi\in(0,\frac{1}{2})$ such that if
		\begin{align*}
		\left|[u(\cdot,t)-\mu^-\geq\xi\omega] \cap B_\rho(x_o)\right| \geq\alpha \left|B_\rho(x_o)\right| \quad \text { for all } t\in\left(t_o-\theta\rho^p,t_o\right]
		\end{align*}
		with $\theta=\delta(\sigma\xi\omega)^{2-p}$, then either
		\begin{align*}
		\left(\frac{\rho}{R}\right)^{\frac{p}{p-1}} \mathrm{Tail}\left((u-\mu^-)_- ; x_o,R;T_1,T_2\right)>\frac{1}{2}\sigma \xi \omega,
		\end{align*}
		or
		\begin{align*}
		|[u-\mu^-\leq\frac{1}{2}\sigma\xi\omega] \cap Q_{\rho}^-(\theta)| \leq \gamma \frac{\sigma^{p-1}}{\delta \alpha}|Q_{\rho}^-(\theta)|,
		\end{align*}
		where $\gamma$ only depends on $N,p,s,\Lambda$, the cylinder $Q_{2 \rho}^-(\theta)$ is included in $\mathcal{Q}$.
		\begin{proof}
			Let $(x_o,y_o)=(0,0)$ and $\mu^-=0$. We employ the Caccioppolic type inequality \eqref{3.1} for $(u-k)_-$ and $k=\sigma\xi\omega$ over the cylinder $B_{2\rho}\times(-\theta \rho^p, 0]$. Let the cutoff function $\zeta(x)\in [0,1]$ independent of time in $B_{2\rho}(x_o)$ vanishing on $\partial B_{\frac{3}{2}\rho}(x_o)$,	satisfy that $\zeta=1$ in $B_\rho(x_o)$ and $|\nabla \zeta|\leq\rho^{-1}$. We derive that
			\begin{align}
			\label{3.9}
			&\iint_{Q_{\rho}^-(\theta)} (u-k)_-(y,t)\int_{B_{2\rho}} \frac{(u-k)_+^{p-1}(x, t)}{|x-y|^{N+sp}}\,dxdyd\tau \nonumber\\
			\leq&\gamma\int_{-\theta\rho^p}^0\int_{B_{2\rho}}(u-k)_-^p(x,t)|\nabla\zeta(x,t)|^p\,dxd\tau+\int_{B_{2\rho}}(u-k)_-^2(x,-\theta\rho^p)\,dx\nonumber\\
			&+\gamma \int_{-\theta\rho^p}^0\int_{B_{2\rho}}\int_{B_{2\rho}}\max\{(u-k)_-(x,t), (u-k)_-(y,t)\}^p \frac{|\zeta(x)-\zeta(y)|^p}{|x-y|^{N+sp}}\,dxdyd\tau \nonumber\\
			&+\gamma\mathop{\mathrm{ess}\,\sup}_{\stackrel{-\theta\rho^p<t<0}{x\in B_{\frac{3}{2}\rho}}}\int_{\mathbb{R}^N \backslash B_{2\rho}}\frac{(u-k)_-^{p-1}(y,t)}{|x-y|^{N+sp}}\,dy\int_{-\theta\rho^p}^0\int_{B_{2\rho}}(u-k)_-\zeta^p(x,t)\,dxd\tau.
			\end{align}
			By enforcing
			\begin{align*}
			\left(\frac{\rho}{R}\right)^{\frac{p}{p-1}} \mathrm{Tail}\left(u_- ;R;T_1,T_2\right)\leq\frac{1}{2}\sigma \xi \omega
			\end{align*}
			and estimating the right-hand side of \eqref{3.9} as Lemma \ref{lem-3-2}, there holds that
			\begin{align}
			\label{3.10}
			\iint_{Q_{\rho}^-(\theta)} (u-k)_-(y,t)\int_{B_{2\rho}} \frac{(u-k)_+^{p-1}(x, t)}{|x-y|^{N+sp}}\,dxdyd\tau \leq\gamma \frac{(\sigma \xi \omega)^p}{\delta \rho^p}|Q_{\rho}^-(\theta)|.
			\end{align}
			Moreover, the left-hand side of \eqref{3.9} can be estimated by integral over smaller sets
			\begin{align}
			\label{3.11}
			&\iint_{Q_{\rho}^-(\theta)} (u-k)_-(y,t)\chi_{\left\{u(y,t) \leq \frac{1}{2} \sigma \xi \omega\right\}}\int_{B_{2\rho}} \frac{(u-k)_+^{p-1}(x,t) \chi_{\{u(x, t) \geq \xi \omega\}}}{|x-y|^{N+sp}}\,dxdydt\nonumber \\
			\geq&  \frac{1}{2} \sigma \xi \omega\left|\left[u \leq \frac{1}{2} \sigma \xi \omega\right] \cap Q_{\rho}^-(\theta)\right|\left(\frac{\left(\frac{1}{2} \xi \omega\right)^{p-1} \alpha\left|B_{\varrho}\right|}{(4 \rho)^{N+s p}}\right) \nonumber\\
			=&\gamma \frac{(\xi \omega)^p \alpha \sigma}{\rho^p}\left|\left[u \leq \frac{1}{2} \sigma \xi \omega\right]\cap Q_{\rho}^-(\theta)\right| .
			\end{align}
			Combining \eqref{3.10} and \eqref{3.11}, we finish the proof of this lemma.	
		\end{proof}
	\end{lemma}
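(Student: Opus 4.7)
I apply the Caccioppoli-type inequality of Proposition \ref{lem-3-1} to $(u-k)_-$ with the intermediate truncation level $k = \sigma\xi\omega$ on the slightly enlarged cylinder $B_{2\rho}\times(-\theta\rho^p, 0]$, using a space-only cutoff $\zeta(x)$ that equals $1$ on $B_\rho$, vanishes outside $B_{3\rho/2}$, and satisfies $|\nabla\zeta|\lesssim \rho^{-1}$. Assume $(x_o,t_o)=(0,0)$ and $\mu^-=0$ for clarity. The key observation, inherited from Liao's form of the energy estimate, is that the cross term
\[
\iint_{Q_\rho^-(\theta)} (u-k)_-(y,t)\int_{B_{2\rho}} \frac{(u-k)_+^{p-1}(x,t)}{|x-y|^{N+sp}}\,dx\,dy\,dt
\]
appears on the \emph{left-hand side} of Caccioppoli (with the roles of $(\cdot)_\pm$ swapped, since $u$ is a supersolution). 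This term couples the hypothesised positivity information at level $\xi\omega$ to the unknown smallness set at level $\tfrac{1}{2}\sigma\xi\omega$, and is what lets us bypass both the logarithmic estimate and the exponential change of variables.

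\textbf{Bounding the right-hand side.} Because the time cutoff is trivial on $(-\theta\rho^p, 0]$, the $\partial_t\zeta^p$ contribution is absent, but the initial slice $\int_{B_{2\rho}}(u-k)_-^2(x,-\theta\rho^p)\,dx$ contributes at most $(\sigma\xi\omega)^2|B_{2\rho}|$. Using $(u-k)_-\leq \sigma\xi\omega$, the gradient, jump, and nonlocal tail terms are bounded in exactly the same manner as in Lemmas \ref{lem-3-2}--\ref{lem-3-4}, provided we enforce the tail alternative $(\rho/R)^{p/(p-1)}\mathrm{Tail}(u_-;R;T_1,T_2)\leq \tfrac{1}{2}\sigma\xi\omega$. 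Since $\theta = \delta(\sigma\xi\omega)^{2-p}$, the initial-slice contribution matches the other contributions up to a $1/\delta$ factor, and everything combines into
\[
\text{RHS} \leq \gamma\,\frac{(\sigma\xi\omega)^p}{\delta\rho^p}\,|Q_\rho^-(\theta)|.
\]

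\textbf{Bounding the left-hand side from below.} On the set $\{u(y,t)\leq \tfrac{1}{2}\sigma\xi\omega\}$ we have $(u-k)_-(y,t)\geq \tfrac{1}{2}\sigma\xi\omega$; on $\{u(x,t)\geq \xi\omega\}$, since $\sigma\leq \tfrac{1}{2}$, we have $(u-k)_+(x,t)\geq (1-\sigma)\xi\omega\geq \tfrac{1}{2}\xi\omega$. The standing hypothesis guarantees that for every $t\in(-\theta\rho^p, 0]$ the latter slice inside $B_\rho$ has measure at least $\alpha|B_\rho|$, and for $x,y\in B_{2\rho}$ one has $|x-y|^{-(N+sp)}\geq (4\rho)^{-(N+sp)}$. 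Restricting the cross-integral accordingly yields
\[
\text{LHS} \geq \gamma\,\alpha\sigma\,\frac{(\xi\omega)^p}{\rho^p}\,\bigl|\bigl[u\leq \tfrac{1}{2}\sigma\xi\omega\bigr]\cap Q_\rho^-(\theta)\bigr|,
\]
and rearranging against the upper bound delivers the stated measure estimate with constant $\sigma^{p-1}/(\delta\alpha)$.

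\textbf{Main obstacle.} The real subtlety is the choice of the three-level hierarchy $\tfrac{1}{2}\sigma\xi\omega < \sigma\xi\omega < \xi\omega$: only such a three-scale truncation simultaneously makes $(u-k)_-$ small enough for the RHS bookkeeping, forces $(u-k)_+^{p-1}\gtrsim (\xi\omega)^{p-1}$ on the positivity set, and leaves a gap of order $\sigma\xi\omega$ between the truncation level and the target level. Threading this choice through the tail alternative so that the intrinsic time scale $\theta = \delta(\sigma\xi\omega)^{2-p}$ is consistent with the $(\rho/R)^{p/(p-1)}$ factor on the tail, and checking that no term on the RHS secretly dominates the favourable $\sigma^{p-1}/(\delta\alpha)$ scaling we want, is the delicate step; everything else is routine bookkeeping of powers of $\sigma$, $\xi\omega$ and $\delta$.
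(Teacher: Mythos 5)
Your proposal reproduces the paper's argument essentially line for line: same truncation level $k=\sigma\xi\omega$, same energy estimate on $B_{2\rho}\times(-\theta\rho^p,0]$ with a purely spatial cutoff, same identification of the fractional cross term on the left of Caccioppoli as the quantity to be bounded from two sides, and the same bookkeeping leading to the stated measure estimate. This is the paper's proof, and it is a correct match in every step.

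One point worth flagging, present identically in your write-up and in the paper's display \eqref{3.11}: restricting the cross-term kernel to $x,y\in B_{2\rho}$ produces the factor $|B_\rho|\,(4\rho)^{-(N+sp)}\sim\rho^{-sp}$, not $\rho^{-p}$. Since $\rho\le 1$ and $s<1$ this is the \emph{weaker} power, so pairing it against the right-hand bound $\gamma\,(\sigma\xi\omega)^p/(\delta\rho^p)\,|Q^-_\rho(\theta)|$ (which is genuinely $\rho^{-p}$, driven by the initial slice, the local gradient term, and the tail) leaves the measure estimate with an extra multiplicative factor $\rho^{-(1-s)p}$ relative to what is asserted. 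You should satisfy yourself that this factor is harmless, or else rework the right-hand side so that the locally scaled terms are also matched at the fractional scale; as written, the claimed constant $\sigma^{p-1}/(\delta\alpha)$ does not quite follow from the two displayed bounds.
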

	
	\section{Proof of Theorem 1.2 with $1<p\leq 2$}
	\label{sec4}
	
	The following proof in Sections 4 and 5 is analogous to that in \cite{L22}, but for the sake of completeness and readability, we present the details here. To streamline, we denote $Q_\rho(\theta)$ as the backward cylinders and omit the sign ``-". The definitions of $\mu^\pm$, $\omega$ and $\mathcal{Q}$ are the same as the ones in Section \ref{sec3}. 
	
	Next, we will devote to proving Theorem \ref{thm-1-2} in the case of $1<p\leq2$. We start this section with the expansion of positivity which plays an important role to get the reduction of oscillation in this scenario.
	
	\subsection{Expansion of positivity}
	\begin{proposition}
		\label{pro-4-1}
		Assume that $1<p\leq 2$. Let $u$ is a locally bounded, local weak supersolution to \eqref{1.1} in $E_T$. If for some constants $\alpha,\xi\in(0,1)$, it holds that
		\begin{align*}
		\left|[u(\cdot,t_o)-\mu^- \geq\xi\omega] \cap B_\rho(x_o)\right| \geq \alpha\left|B_\rho(x_o)\right|,
		\end{align*}
		then there exist $\eta,\delta\in(0,1)$ depending on $N,p,s,\Lambda$ and $\alpha$ such that either
		\begin{align*}
		\left(\frac{\rho}{R}\right)^{\frac{p}{p-1}} \mathrm{Tail}\left((u-\mu^-)_- ; x_o,R;T_1,T_2\right)>\eta \xi \omega,
		\end{align*}
		or
		\begin{align*}
		u-\mu^-\geq\eta \xi \omega\quad \text { a.e. in } B_{2 \rho}(x_o) \times\left(t_o+\frac{1}{2} \delta(\xi \omega)^{2-p} \rho^p, t_o+\delta(\xi \omega)^{2-p} \rho^p\right]
		\end{align*}
		with
		\begin{align*}
		B_{4\rho}(x_o) \times(t_o, t_o+\delta(\xi \omega)^{2-p} \rho^p] \subset \mathcal{Q}.
		\end{align*}
	\end{proposition}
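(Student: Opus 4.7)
The plan is to chain three ingredients from Section~\ref{sec3} — the time-propagation of positivity (Lemma~\ref{lem-3-4}), the measure shrinking in each time slice (Lemma~\ref{lem-3-5}), and the De Giorgi lemma on backward cylinders (Lemma~\ref{lem-3-2}) — while tracking the ``tail alternative'' through each step. Throughout we may assume that the tail alternative $(\rho/R)^{p/(p-1)}\mathrm{Tail}((u-\mu^-)_-;x_o,R;T_1,T_2)>\eta\xi\omega$ fails, since otherwise nothing remains to show.

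First, I would apply Lemma~\ref{lem-3-4} to the standing measure hypothesis at time $t_o$, producing constants $\epsilon,\delta_1\in(0,1)$ depending on $N,p,s,\Lambda,\alpha$ such that
\[
|[u(\cdot,t)-\mu^-\geq \epsilon\xi\omega]\cap B_\rho(x_o)| \geq \tfrac{\alpha}{2}|B_\rho(x_o)|
\qquad\text{for all } t\in \bigl(t_o,\, t_o+\delta_1(\xi\omega)^{2-p}\rho^p\bigr].
\]
Next, for a generic time $\tau$ in the target slab, I would apply Lemma~\ref{lem-3-5} backward from $\tau$ on a ball of radius comparable to $\rho$, with time scale $\theta_\star:=\delta_0(\sigma\epsilon\xi\omega)^{2-p}$ and a small free parameter $\sigma\in(0,\tfrac12)$. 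Because $1<p\leq 2$, the inequality $(\sigma\epsilon)^{2-p}\leq 1$ guarantees that, provided $\delta_0$ is chosen small enough relative to $\delta_1$, this backward cylinder sits inside the forward interval produced in the previous step; the spatial requirement is accommodated by the hypothesis $B_{4\rho}(x_o)\subset \mathcal{Q}$. This yields the smallness
\[
|[u-\mu^-\leq \tfrac12\sigma\epsilon\xi\omega]\cap Q_\rho^-(\theta_\star)| \leq \gamma\, \frac{\sigma^{p-1}}{\delta_0\alpha}\,|Q_\rho^-(\theta_\star)|.
\]
Finally, I would invoke Lemma~\ref{lem-3-2} with level $\tfrac12\sigma\epsilon\xi\omega$ on the same cylinder: choosing $\sigma$ so small that the right-hand side above falls below the threshold $\nu$ from that lemma, one concludes $u-\mu^-\geq \tfrac14\sigma\epsilon\xi\omega$ on a smaller backward cylinder. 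Letting $\tau$ range over the target time interval and using a standard covering/translation argument to upgrade the spatial ball from $B_\rho$ to $B_{2\rho}(x_o)$ yields the stated pointwise lower bound, with $\eta := \tfrac14\sigma\epsilon$ and $\delta := \delta_1$.

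The main obstacle is the intrinsic-scale bookkeeping. Each of Lemmas~\ref{lem-3-2}, \ref{lem-3-4}, \ref{lem-3-5} carries its own time scale of the form $(\cdot\,\xi\omega)^{2-p}$, and these must be synchronized so that (i) the backward cylinder of the measure-shrinking step lies inside the forward interval of the propagation step, (ii) the De Giorgi cylinder sits inside the measure-shrinking cylinder, and (iii) the final positivity slab exhausts the target interval $(t_o+\tfrac12\delta(\xi\omega)^{2-p}\rho^p,t_o+\delta(\xi\omega)^{2-p}\rho^p]$. In the subquadratic range $1<p\leq 2$ this matching is automatically favorable, since $(\sigma\epsilon)^{2-p}\leq 1$ contracts the measure-shrinking time scale below the propagation scale; this is precisely what fails for $p>2$ and necessitates the more delicate treatment in Section~\ref{sec5}. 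A secondary point is to merge the three separate tail thresholds into the single alternative $(\rho/R)^{p/(p-1)}\mathrm{Tail}(\cdot)>\eta\xi\omega$ by taking the smallest of them, which is comparable to $\eta\xi\omega$.
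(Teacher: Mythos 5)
Your chaining Lemma~\ref{lem-3-4} $\to$ Lemma~\ref{lem-3-5} $\to$ Lemma~\ref{lem-3-2} and your observation that $1<p\leq 2$ makes $(\sigma\epsilon)^{2-p}\leq 1$ are both exactly the mechanism the paper uses, and your handling of the tail alternative is fine. But there is a genuine gap in the spatial geometry: you run the entire argument on the original ball $B_\rho(x_o)$ and then propose to ``upgrade the spatial ball from $B_\rho$ to $B_{2\rho}(x_o)$'' by a ``standard covering/translation argument.'' That does not work here. The only measure information you have is at time $t_o$ on the single ball $B_\rho(x_o)$; you have no analogous information on translated balls $B_\rho(x')$ for $x'$ near $x_o$, so there is nothing to cover with. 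Moreover, Lemma~\ref{lem-3-2} applied on a cylinder over $B_\rho$ only yields positivity on $B_{\rho/2}$, so your chain, before the ``upgrade,'' produces positivity on a ball \emph{smaller} than the one you started with, which is the opposite of expansion of positivity.

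The paper's resolution is to enlarge the ball \emph{at the start}, not at the end: the hypothesis $|[u(\cdot,t_o)-\mu^-\geq\xi\omega]\cap B_\rho|\geq\alpha|B_\rho|$ is trivially rewritten as a measure bound on $B_{4\rho}$ with the worse constant $4^{-N}\alpha$, namely $|[u(\cdot,t_o)-\mu^-\geq\xi\omega]\cap B_{4\rho}|\geq 4^{-N}\alpha|B_{4\rho}|$. One then applies Lemma~\ref{lem-3-4} on $B_{4\rho}$ (this is why the statement requires $B_{4\rho}(x_o)\times(t_o,t_o+\delta(\xi\omega)^{2-p}\rho^p]\subset\mathcal{Q}$), feeds the resulting slice-wise estimate on $B_{4\rho}$ into Lemma~\ref{lem-3-5} with $\alpha$ replaced by $\tfrac12 4^{-N}\alpha$, and finally applies Lemma~\ref{lem-3-2} on the cylinder over $B_{4\rho}$, whose conclusion lands on $B_{\frac12\cdot 4\rho}=B_{2\rho}$ — precisely the expanded ball. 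If you replace your final covering step by this ``start on $B_{4\rho}$'' device, the rest of your proposal is essentially the paper's proof.
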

	\begin{proof}
		Assume that $(x_o,t_o)=(0,0)$, $\mu^-=0$ and fix $\alpha\in(0,1)$. We can employ Lemma \ref{lem-3-4} in $B_{4\rho}$ since the assumption is about measure information at $t_o$. If we enforce
		\begin{align*}
		\left(\frac{\rho}{R}\right)^{\frac{p}{p-1}} \mathrm{Tail}\left(u_- ;R;T_1,T_2\right)\leq\xi \omega,
		\end{align*}
		there exist $\delta,\epsilon\in(0,1)$ obtained in Lemma \ref{lem-3-4} that only depend on $N,p,s,\Lambda$ and $\alpha$ such that
		\begin{align*}
		\left|[u(\cdot,t)>\epsilon \xi\omega] \cap B_{4\rho}\right| \geq \frac{\alpha}{2} 4^{-N} |B_{4\rho}| \quad \text { for all } t\in\left(0,\delta(\xi\omega)^{2-p}(4\rho)^p\right].
		\end{align*}
		The above estimate about each slice of time interval permits us to utilize Lemma \ref{lem-3-5} with $\xi,\alpha$ replaced by $\varepsilon\xi,\frac{1}{2}4^{-N}\alpha$.  Since $1<p\leq 2$ and $\sigma\in(0,1)$, it is easy to see
		\begin{align*}
		\delta(\sigma\epsilon\xi\omega)^{2-p}\leq\delta(\xi\omega)^{2-p}.
		\end{align*}
		Therefore, we can set $t_o=\delta(\sigma\epsilon\xi\omega)^{2-p}(4\rho)^p$ in Lemma \ref{lem-3-5}. If
		\begin{align*}
		\left(\frac{\rho}{R}\right)^{\frac{p}{p-1}} \mathrm{Tail}\left(u_- ;R;T_1,T_2\right)\leq\frac{1}{2}\sigma\epsilon\xi \omega,
		\end{align*}
		is in force, it holds that
		\begin{align*}
		|[u-\mu^-\leq\frac{1}{2}\sigma\epsilon\xi\omega] \cap Q_{4\rho}(\tilde{\theta})| \leq \gamma \frac{\sigma^{p-1}}{\delta \alpha}|Q_{4\rho}(\tilde{\theta})|,
		\end{align*}
		where $\tilde{\theta}=\delta(\sigma\epsilon\xi\omega)^{2-p}$. Then we let
		\begin{align*}
		\gamma \frac{\sigma^{p-1}}{\delta \alpha}<\nu, \quad \text { i.e., } \quad \sigma \leq\left(\frac{\nu \delta \alpha}{\gamma}\right)^{\frac{1}{p-1}},
		\end{align*}
		where $\nu$ is determined in Lemma \ref{lem-3-2} that depends on $N,p,s,\Lambda$ and $\alpha$.
		Given this choice of $\sigma$, if we enforce
		\begin{align*}
		\left(\frac{\rho}{R}\right)^{\frac{p}{p-1}} \operatorname{Tail}\left(u_{-} ; R;T_1,T_2\right) \leq \frac{1}{2}\sigma \epsilon \xi \omega,
		\end{align*}
		it follows from Lemma \ref{lem-3-2} with $t_o=\delta(\xi\omega)^{2-p}(4\rho)^p$ that
		\begin{align*}
		u \geq \frac{1}{4} \sigma \epsilon \xi \omega \quad \text { a.e. in } B_{2\rho} \times(\delta(\xi\omega)^{2-p}(4\rho)^p-\delta(\sigma \varepsilon \xi \omega)^{2-p}(2 \rho)^p, \delta(\xi \omega)^{2-p}(4 \rho)^p].
		\end{align*}
		The proof can be concluded by choosing $\eta=\frac{1}{4}\sigma\epsilon$.
	\end{proof}
	We emphasize that all constants determined in the proof are stable as $p\rightarrow 2$.
	\subsection{The first step of induction}
	In what follows, we study the reduction of oscillation by induction argument.
	For the cylinder $Q_{\widetilde{R}}\subset E_T$, we denote
	\begin{align*}
	\omega=2 \operatorname*{ess\,\sup}_{Q_{\widetilde{R}}}|u|+\mathrm{Tail}(u;x_o,\widetilde{R};t_o-\widetilde{R},t_o)
	\end{align*}
	and $Q_o=Q_R(\omega^{2-p})$. Obverse that we may let $Q_o\subset Q_{\widetilde{R}}$ by shrinking $R$ appropriately. Define
	\begin{align*}
	\mu^+= \operatorname*{ess\,\sup}_{Q_o} u,\quad \mu^-= \operatorname*{ess\,\inf}_{Q_o} u.
	\end{align*} 	
	We may assume $(x_o,t_o)=(0,0)$ by translation. Based on the definition above, we can see the intrinsic relation
	\begin{align}
	\label{4.1}
	\operatorname*{ess\,osc}_{Q_R(\omega^{2-p})} u\leq\omega.
	\end{align}
	
	Fix $\alpha=\frac{1}{2}$ and let $\delta\in(0,1)$ be chosen in Proposition \ref{pro-4-1}. Set $\tau=\delta(\frac{1}{4} \omega)^{2-p}(c R)^p$, where $c\in(0,\frac{1}{4})$ will be determined later. If $\mu^+-\mu^-\geq\frac{1}{2}\omega$, either
	\begin{align}
	\label{4.2}
	\left|\left[u(\cdot,-\tau)-\mu^->\frac{1}{4} \omega\right] \cap B_{cR}\right| \geq \frac{1}{2}|B_{cR}|,
	\end{align}
	or
	\begin{align}
	\label{4.3}
	\left|\left[\mu^+-u(\cdot,-\tau)>\frac{1}{4} \omega\right] \cap B_{cR}\right| \geq \frac{1}{2}|B_{cR}|
	\end{align}
	must hold. In addition, the other case $\mu^+-\mu^-\leq\frac{1}{2}\omega$ can directly reach the forthcoming estimate \eqref{4.6}. Here, we may assume \eqref{4.2} holds. As a consequence of Proposition \ref{pro-4-1} with $\alpha=\frac{1}{2},\xi=\frac{1}{4}$ and $\rho=cR$, there exists $\eta\in(0,\frac{1}{4})$ depending on $N,p,s,\Lambda$ such that either
	\begin{align}
	\label{4.4}
	c^{\frac{p}{p-1}} \mathrm{Tail}\left((u-\mu^-)_- ;R;-R^p\omega^{2-p},0\right)>\eta\omega,
	\end{align}
	or
	\begin{align}
	\label{4.5}
	u-\mu^-\geq\eta\omega  \quad \text { a.e. in } Q_{cR}\left(\delta\Big(\frac{1}{4}\omega\Big)^{2-p}\right).
	\end{align}
	In fact, if \eqref{4.4} is not true, from \eqref{4.5} and \eqref{4.1} it follows that
	\begin{align}
	\label{4.6}
	\operatorname*{ess\,osc}_{Q_{cR}(\delta(\frac{1}{4}\omega)^{2-p})} u\leq(1-\eta)\omega=:\omega_1.
	\end{align}
	
	In the following, we will devote to selecting the number $c$ to ensure \eqref{4.4} does not occur. We first estimate the tail term as
	\begin{align}
	\label{4.*}
	&[\mathrm{Tail}((u-\mu^-)_-;R;-R^p\omega^{2-p},0)]^{p-1}\nonumber\\
	=&R^p \operatorname*{ess\,\sup}_{-\omega^{2-p} R^p<t<0}\int_{\mathbb{R}^N \backslash B_R} \frac{(u-\mu^-)_-^{p-1}}{|x|^{N+sp}}\,dx\nonumber\\
	\leq& \gamma \omega^{p-1}+\gamma R^p \operatorname*{ess\,\sup}_{-\omega^{2-p} R^p<t<0} \int_{\mathbb{R}^N \backslash B_R} \frac{u_-^{p-1}}{|x|^{N+s p}}\,dx\nonumber\\
	=&\gamma \omega^{p-1}+\gamma R^p \operatorname*{ess\,\sup}_{-\omega^{2-p} R^p<t<0}\left[\int_{\mathbb{R}^N \backslash B_{\widetilde{R}}} \frac{u_-^{p-1}}{|x|^{N+s p}}\,dx+\int_{B_{\widetilde{R}} \backslash B_R} \frac{u_-^{p-1}}{|x|^{N+s p}}\,dx\right] \nonumber\\
	\leq& \gamma \omega^{p-1},
	\end{align}	
	where we used the definitions of $\omega$ and tail. Thus, we just need to take
	\begin{align}
	\label{4.7}
	c^{\frac{p}{p-1}} \gamma \omega \leq \eta \omega, \quad\text{ i.e., }\quad c \leq\left(\frac{\eta}{\gamma}\right)^{\frac{p-1}{p}},
	\end{align}
	then \eqref{4.4} will not happen.
	
	To verify the first step of induction is true, let $R_1=\lambda R$ for some $\lambda\leq c$ to satisfy
	\begin{align}
	\label{4.8}
	Q_{R_1}(\omega_1^{2-p}) \subset Q_{cR}(\delta(\frac{1}{4} \omega)^{2-p}), \quad \text { i.e., } \quad \lambda \leq 4^{\frac{p-2}{p}} \delta^{\frac{1}{p}} c ,
	\end{align}
	which together with \eqref{4.6} implies that
	\begin{align*}
	\operatorname*{ess\,osc}_{Q_{R_1}(\omega_1^{2-p})}u \leq \omega_1.
	\end{align*}
	\subsection{Induction}
	Next, we will proceed by a suitable induction to obtain the desired oscillation decay.
	For any $i=0,1,\cdots,j$, we define
	\begin{align*}
	R_o=R,\quad R_i=\lambda R_{i-1},\quad \omega_i=(1-\eta) \omega_{i-1},\quad Q_i=Q_{R_i}(\omega_i^{2-p})
	\end{align*}
	and
	\begin{align}
	\label{4.9}
	\mu_i^+= \operatorname*{ess\,\sup}_{Q_i} u,\quad \mu_i^-= \operatorname*{ess\,\inf}_{Q_i} u,\quad \operatorname*{ess\,osc}_{Q_i} u\leq \omega_i.
	\end{align}
	Assuming that the above oscillation estimate holds for some $j\geq 0$, we aim at proving it holds for $i=j+1$. Specifically, we can repeat the process of the first step to get the oscillation decay with $\mu_j^\pm, \omega_j, R_j, Q_j$ and
	$\tau=\delta(\frac{1}{4} \omega_j)^{2-p}(c R_j)^p$. If $\mu_j^+-\mu^-_j\geq\frac{1}{2}\omega_j$, one of the following two alternatives
	\begin{align*}
	\left|\left[u(\cdot,-\tau)-\mu^-_j>\frac{1}{4} \omega_j\right] \cap B_{cR_j}\right| \geq \frac{1}{2}|B_{cR_j}|,
	\end{align*}
	or
	\begin{align*}
	\left|\left[\mu_j^+-u(\cdot,-\tau)>\frac{1}{4} \omega_j\right] \cap B_{cR_j}\right| \geq \frac{1}{2}|B_{cR_j}|
	\end{align*}
	must hold.
	If $\mu_j^+-\mu^-_j\leq\frac{1}{2}\omega_j$, we can get the forthcoming estimate \eqref{4.11} immediately. Suppose the first alternative is true, which allows us to apply Proposition \ref{pro-4-1} in $Q_j$ with $\alpha=\frac{1}{2},\xi=\frac{1}{4}$ and $\rho=cR_j$. It follows that either
	\begin{align}
	\label{4.10}
	c^{\frac{p}{p-1}} \mathrm{Tail}((u-\mu_j^-)_- ; R_j;-R_j^p\omega_j^{2-p},0)>\eta \omega_j,
	\end{align}
	or
	\begin{align*}
	u-\mu^-_j\geq\eta\omega_j  \quad \text { a.e. in } Q_{cR_j}(\delta(\frac{1}{4}\omega_j)^{2-p}),
	\end{align*}
	which along with \eqref{4.9} tells
	\begin{align}
	\label{4.11}
	\operatorname*{ess\,osc}_{Q_{cR_j}(\delta(\frac{1}{4}\omega_j)^{2-p})} u\leq(1-\eta)\omega_j=:\omega_{j+1}.
	\end{align}
	
	Therefore, we need to choose the range of $c$ such that \eqref{4.10} does not happen. From the definition of tail, we have
	\begin{align}
	\label{4.12}
	&\quad [\mathrm{Tail}((u-\mu^-_j)_-;R_j;-R_j^p\omega_j^{2-p},0)]^{p-1}\nonumber \\
	&=R_j^p \operatorname*{ess\,\sup}_{-\omega_j^{2-p} R_j^p<t<0}\int_{\mathbb{R}^N \backslash B_j} \frac{(u-\mu^-_j)_-^{p-1}}{|x|^{N+sp}}\,dx\nonumber\\
	&=R_j^p\operatorname*{ess\,\sup}_{-\omega_j^{2-p} R_j^p<t<0}\left[\int_{\mathbb{R}^N \backslash B_R} \frac{(u-\mu_j^-)_-^{p-1}}{|x|^{N+s p}}\,dx+\sum_{i=1}^{j} \int_{B_{i-1} \backslash B_i} \frac{(u-\mu_j^-)_-^{p-1}}{|x|^{N+sp}}\,dx \right].
	\end{align}
	The first integral can be estimated as
	\begin{align}
	\label{4.13}
	\int_{\mathbb{R}^N \backslash B_R} \frac{(u-\mu_j^-)_-^{p-1}}{|x|^{N+sp}}\,dx & \leq \gamma \int_{\mathbb{R}^N \backslash B_R} \frac{|\mu_j^-|^{p-1}+u_-^{p-1}}{|x|^{N+sp}}\,dx \nonumber\\
	& \leq \gamma \frac{\omega^{p-1}}{R^{sp}}+\gamma \int_{B_{\widetilde{R}} \backslash B_R} \frac{u_-^{p-1}}{|x|^{N+sp}}\,dx+\gamma \int_{\mathbb{R}^N \backslash B_{\widetilde{R}}} \frac{u_-^{p-1}}{|x|^{N+sp}}\,dx \nonumber\\
	& \leq \gamma \frac{\omega^{p-1}}{R^p}
	\end{align}
	for every $t \in(-\omega_j^{2-p} R_j^p, 0)$, where we have used the definition of $\omega$. For the second integral, obverse that for $i=1,2, \cdots, j$,
	\begin{align*}
	(u-\mu_j^-)_- \leq \mu_j^--\mu_{i-1}^- \leq \mu_j^+-\mu_{i-1}^- \leq \mu_{i-1}^+-\mu_{i-1}^- \leq \omega_{i-1} \quad \text { a.e. in } Q_{i-1}.
	\end{align*}
	Thus, for every $t\in(-\omega_j^{2-p} R_j^p, 0)$,
	\begin{align}
	\label{4.14}
	\int_{B_{i-1} \backslash B_i} \frac{(u-\mu_j^-)_-^{p-1}}{|x|^{N+sp}}\,dx \leq \gamma \frac{\omega_{i-1}^{p-1}}{R_i^p}.
	\end{align}
	Taking \eqref{4.12}--\eqref{4.14} into account yields that
	\begin{align*}
	&\quad [\mathrm{Tail}((u-\mu^-_j)_-;R_j;-R_j^p\omega_j^{2-p},0)]^{p-1}\\ &\leq \gamma R_j^p \sum_{i=1}^{j} \frac{\omega_{i-1}^{p-1}}{R_i^p}\nonumber\\
	&=\gamma \omega_j^{p-1} \sum_{i=1}^{j}(1-\eta)^{(j-i+1)(1-p)} \lambda^{(j-i)p}.
	\end{align*}
	Indeed, if we take
	\begin{align*}
	(1-\eta)^{1-p} \lambda^p \leq \frac{1}{2}, \quad \text { i.e., } \quad \lambda \leq 2^{-\frac{1}{p}}(1-\eta)^{\frac{p-1}{p}},
	\end{align*}	
	there holds
	\begin{align*}
	[\mathrm{Tail}((u-\mu^-_j)_-;R_j;-R_j^p\omega_j^{2-p},0)]^{p-1}\leq \gamma\omega_j^{p-1}.
	\end{align*}
	Then \eqref{4.10} can be avoided if we choose constant $c$ as
	\begin{align}
	\label{4.15}
	c^p \gamma \leq \eta^{p-1}, \quad \text { i.e., } \quad c \leq \frac{1}{\gamma} \eta^{\frac{p-1}{p}} .
	\end{align}
	Here, we take the smaller of \eqref{4.7} and \eqref{4.15} as the final choice of $c$.
	
	As the first step, set  $R_{j+1}=\lambda R_j$ for some $\lambda \in(0,1)$ to satisfy
	\begin{align}
	\label{4.16}
	Q_{R_{j+1}}(\omega_{j+1}^{2-p}) \subset Q_{c R_j}\left(\delta(\frac{1}{4} \omega_j)^{2-p}\right), \quad \text { i.e., } \quad \lambda \leq 4^{\frac{p-2}{p}} \delta^{\frac{1}{p}} c .
	\end{align}
	Notice that due to the change of $c$, the choice of $\lambda$ may differ from the one in \eqref{4.8}. Hence, we choose $\lambda$ as
	\begin{align*}
	\lambda=\min \left\{2^{-\frac{1}{p}}(1-\eta)^{\frac{p-1}{p}}, 4^{\frac{p-2}{p}} \delta^{\frac{1}{p}} c\right\}.
	\end{align*}
	Then from \eqref{4.11} and \eqref{4.16}, we infer
	\begin{align*}
	\operatorname*{ess\,osc}_{Q_{R_{j+1}}(\omega_{j+1}^{2-p})} u \leq \omega_{j+1}.
	\end{align*}
	Since we get the reduction of oscillation, the arguments of proving H\"{o}lder regularity are standard, see \cite[Chapter III, Proposition 3.1]{D93}.
	
	\section{Proof of Theorem 1.2 with $p>2$}
	\label{sec5}
	In this section, for cylinders $Q_{\widetilde{R}}\subset E_T$, define
	\begin{align*}
	\omega=2 \operatorname*{ess\,\sup}_{Q_{\widetilde{R}}}|u|+\mathrm{Tail}(u;x_o,\widetilde{R};t_o-\widetilde{R}^p,t_o)
	\end{align*}
	and $Q_o=Q_R(a\theta)$ with $\theta=(\frac{1}{4}\omega)^{2-p}$, $a\in(0,1)$ that will be chosen later. By scaling $R$, we can let $Q_o\subset Q_{\widetilde{R}}$. Denote
	\begin{align*}
	\mu^+= \operatorname*{ess\,\sup}_{Q_o} u,\quad \mu^-= \operatorname*{ess\,\inf}_{Q_o} u.
	\end{align*}
	For simplicity, let $(x_o,t_o)=(0,0)$. It is easy to see
	\begin{align}
	\label{5.1}
	\operatorname*{ess\,osc}_{Q_R(a(\frac{1}{4}\omega)^{2-p})} u\leq\omega.
	\end{align}
	
	The case of $p>2$ is not the same as $1<p<2$ since the index in the definition of $\theta$ is negative.  Thus we need to choose suitable intrinsic cylinders for expansion of positivity and be careful in dealing with the tail term.
	
	\subsection{The first alternative}
	Next, we study the supersolution of \eqref{1.1} near its infimum. We discuss
	\begin{align}
	\label{5.2}
	\mu^+-\mu^->\frac{1}{2}\omega,
	\end{align}
	the other case can be processed similarly. Let $a,c\in(0,1)$ temporarily satisfy $a>2c^p$ that will be verified later. Suppose there holds that
	\begin{align}
	\label{5.3}
	\left|\left[u\leq \mu^-+\frac{1}{4} \omega\right] \cap(0, \bar{t})+Q_{c R}(\theta)\right| \leq \nu\left|Q_{c R}(\theta)\right|
	\end{align}
	for some $\bar{t} \in\left(-a \theta R^p+\theta(cR)^p, 0\right]$, where $\nu$ is the one given in Lemma \ref{lem-3-2}. Considering Lemma \ref{lem-3-2} with $\delta=1, \xi=\frac{1}{4}$ and $\rho=c R$ yields that either
	\begin{align}
	\label{5.4}
	c^{\frac{p}{p-1}} \mathrm{Tail}((u-\mu^-)_{-};R;-R^pa\theta,0)>\frac{1}{4}\omega,
	\end{align}
	or
	\begin{align}
	\label{5.5}
	u \geq \mu^-+\frac{1}{8}\omega \quad \text { a.e. in }(0, \bar{t})+Q_{\frac{1}{2} c R}(\theta).
	\end{align}
	In particular, the tail term can be processed as \eqref{4.*} to get
	\begin{align*}
	\mathrm{Tail}((u-\mu^-)_-;R;-R^pa\theta,0) \leq \gamma \omega.
	\end{align*}
	In order to avoid \eqref{5.4} happening, we choose $c$ as follows
	\begin{align}
	\label{5.6}
	c^{\frac{p}{p-1}} \gamma \omega \leq \frac{1}{4} \omega, \quad \text { i.e., } \quad c \leq\left(\frac{1}{4 \gamma}\right)^{\frac{p-1}{p}}.
	\end{align}
	By the pointwise estimate in \eqref{5.5}, we can apply Lemma \ref{lem-3-3} with $t_*=\bar{t}-\theta\left(\frac{1}{2} cR\right)^{p}$, $\rho=\frac{1}{2}cR$ to infer for some $\xi_o\in(0,\frac{1}{8})$ either
	\begin{align}
	\label{5.7}
	\left(\frac{c}{2} \right)^{\frac{p}{p-1}} \mathrm{Tail}((u-\mu^-)_-;R;-R^pa\theta,0)>\xi_{o} \omega,
	\end{align}
	or
	\begin{align}
	\label{5.8}
	u \geq \mu^-+\frac{1}{2} \xi_o \omega \quad \text{a.e. in}
	\quad B_{\frac{1}{4} c R} \times\left(t_*, t_*+\nu_o(\xi_{o} \omega)^{2-p}\left(\frac{1}{2} c R\right)^{p}\right].
	\end{align}
	Obverse that if we choose $\xi_o$ such that
	\begin{align}
	\label{5.9}
	\nu_o(\xi_o \omega)^{2-p}\left(\frac{1}{2}cR\right)^{p} \geq a\left(\frac{1}{4} \omega\right)^{2-p} R^{p}, \quad \text{i.e.,} \quad \xi_o=\frac{1}{4}\left(\frac{\nu_o c^{p}}{2^p a}\right)^{\frac{1}{p-2}},
	\end{align}
	the estimate \eqref{5.8} can be established up to $t=0$. We combine this with \eqref{5.1} to give that
	\begin{align}
	\label{5.10}
	\operatorname*{ess\,osc}_{Q_{\frac{1}{4}cR}(\theta)} u\leq\left(1-\frac{1}{2}\xi_o\right)\omega.
	\end{align}
	From \eqref{5.9}, the parameter $\xi_o$ is dependent on $c$ and $a$ which of both are not determined yet. Next, we assume $c$ is fixed and choose $a$ such that \eqref{5.7} does not occur. Since the tail term can be evaluated as before and be bounded by $\gamma\omega$, we take
	\begin{align*}
	\left(\frac{c}{2} \right)^{\frac{p}{p-1}} \gamma \omega \leq \xi_o\omega \equiv \frac{1}{4}\left(\frac{\nu_o c^p}{2^p a}\right)^{\frac{1}{p-2}} \omega.
	\end{align*}
	Thus the relation between $c$ and $a$ can be obtained from the above inequality as
	\begin{align}
	\label{5.11}
	a=\frac{\nu_o}{\gamma} c^{\frac{p}{p-1}}.
	\end{align}
	Consequently, we can see \eqref{5.10} actually holds by setting $a$ as \eqref{5.11}. Moreover, taking $c$ as
	\begin{align}
	\label{5.12}
	a=\frac{\nu_o}{\gamma} c^{\frac{p}{p-1}}>2 c^p, \quad \text { i.e., } \quad c<\left(\frac{\nu_o}{\gamma}\right)^{\frac{p-1}{p(p-2)}},
	\end{align}
	we can verify the assumption $a>2 c^p$ at the beginning. In fact, if we choose $c$ as \eqref{5.12}, the number $a$ can be determined via \eqref{5.11} finally.
	
	\subsection{The second alternative}
	
	In the following, we study the subsolution of \eqref{1.1} near its supremum. If \eqref{5.3} does not hold for every $\bar{t} \in\left(-a \theta R^p+\theta(cR)^p, 0\right]$,
	we deduce that
	\begin{align}
	\label{5.13}
	\left|\left[\mu^+-u\geq\frac{1}{4} \omega\right] \cap(0, \bar{t})+Q_{c R}(\theta)\right| > \nu\left|Q_{c R}(\theta)\right|.
	\end{align}
	From this, there exists some $t_* \in[\bar{t}-\theta(cR)^p, \bar{t}-\frac{1}{2} \nu \theta(c R)^p]$ such that
	\begin{align}
	\label{5.14}
	\left|\left[\mu^+-u(\cdot,t_*)\geq\frac{1}{4} \omega\right] \cap B_{cR}\right|>\frac{1}{2} \nu|B_{c R}|.
	\end{align}
	In fact, if \eqref{5.14} is false for any $s \in[\bar{t}-\theta(cR)^p, \bar{t}-\frac{1}{2} \nu \theta(c R)^p]$, we have
	\begin{align*}
	&\quad \left|\left[\mu^+-u\geq\frac{1}{4} \omega\right] \cap(0, \bar{t})+Q_{c R}(\theta)\right| \\
	&=\int_{\bar{t}-\theta(cR)^p}^{\bar{t}-\frac{1}{2} \nu \theta(cR)^p}\left|\left[\mu^+-u(\cdot, s) \geq \frac{1}{4} \omega\right] \cap B_{cR}\right|\,ds \\
	&\quad+\int_{\bar{t}-\frac{1}{2} \nu \theta(cR)^p}^{\bar{t}}\left|\left[\mu^+-u(\cdot, s) \geq \frac{1}{4} \omega\right] \cap B_{cR}\right|\,ds \\
	&<\frac{1}{2} \nu|B_{cR}| \theta(c R)^p\left(1-\frac{1}{2} \nu\right)+\frac{1}{2} \nu \theta(c R)^p|B_{cR}| \\
	&<\nu|Q_{cR}(\theta)|,
	\end{align*}
	which contradicts \eqref{5.13}. Thus the result \eqref{5.14} allows us to apply Lemma \ref{lem-3-4} with $\alpha=\frac{1}{2} \nu, \rho=cR$ and the parameter $\xi_1\in(0,\frac{1}{4})$, then there exist $\delta$ and $\epsilon$ depending on $N,p,s,\Lambda$ and $\nu$ such that either
	\begin{align}
	\label{5.15}
	c^{\frac{p}{p-1}} \mathrm{Tail}((\mu^+-u)_+; R;-R^pa\theta,0)>\xi_1 \omega,
	\end{align}
	or
	\begin{align}
	\label{5.16}
	\left|\left[\mu^+-u(\cdot, t) \geq \epsilon \xi_1\omega\right] \cap B_{cR}\right| \geq \frac{\alpha}{2}|B_{cR}| \quad \text{for all} \quad t \in\left(t_*, t_*+\delta(\xi_1 \omega)^{2-p}(cR)^p\right].
	\end{align}
	By choosing
	\begin{align*}
	\delta(\xi_1 \omega)^{2-p}(cR)^p\geq \theta(cR)^p \quad\text { i.e., } \quad \xi_1=\frac{1}{4} \delta^{\frac{1}{p-2}},
	\end{align*}
	it follows that the estimate \eqref{5.16} can be justified up to the time level $\bar{t}$. Due to the arbitrariness of $\bar{t}$, there holds
	\begin{align}
	\label{5.17}
	\left|\left[\mu^+-u(\cdot,t) \geq\epsilon \xi_1 \omega\right] \cap B_{cR}\right| \geq \frac{\alpha}{2}|B_{c R}|\quad\text{for all } t \in(-a \theta R^p+\theta(c R)^p, 0].
	\end{align}
	If \eqref{5.15} does not occur,  we need restrict $c$ as
	\begin{align}
	\label{5.18}
	c^{\frac{p}{p-1}}\gamma\omega\leq\xi_1\omega\quad\text{i.e.,}\quad c \leq\left(\frac{\xi_1}{\gamma}\right)^{\frac{p-1}{p}}.
	\end{align}
	Moreover, if
	\begin{align}
	\label{5.19}
	a \theta R^p-\theta(cR)^p\geq(\sigma \epsilon \xi_1 \omega)^{2-p}(c R)^p, \quad\text{i.e.,} \quad c \leq(\frac{\nu_o}{\gamma})^{\frac{p-1}{p(p-2)}}(\sigma \epsilon \xi_1)^{\frac{p-1}{p}},
	\end{align}
	\eqref{5.17} leads to
	\begin{align*}
	\left|\left[\mu^+-u(\cdot,t) \geq \epsilon \xi_1 \omega\right] \cap B_{cR}\right| \geq \frac{\alpha}{2}|B_{c R}|\quad\text{for all}\quad  t \in(-(\sigma \epsilon \xi_1 \omega)^{2-p}(c R)^p, 0].
	\end{align*}
	Hence, we can employ Lemma \ref{lem-3-5} with $\delta=1, \xi=\epsilon \xi_1$ and $\rho=cR$ to get either
	\begin{align}
	\label{5.20}
	c^{\frac{p}{p-1}} \mathrm{Tail}((\mu^+-u)_+ ; R;-R^pa\theta,0)>\frac{1}{2}\sigma \epsilon \xi_1 \omega,
	\end{align}
	or
	\begin{align*}
	\left|\left[\mu^+-u \leq\frac{1}{2} \sigma \epsilon \xi_1 \omega\right]\cap Q_{c R}(\widetilde{\theta})\right| \leq \gamma\frac{\sigma^{p-1}}{\alpha}|Q_{cR}(\widetilde{\theta})|,
	\end{align*}
	where $\widetilde{\theta}=(\sigma \epsilon \xi_1 \omega)^{2-p}$ and $\gamma$ depends only on $N,p,s,\Lambda$. After that, we fix $\nu$ as in Lemma \ref{lem-3-2} and choose $\sigma\in(0,\frac{1}{2})$ satisfying
	\begin{align*}
	\gamma \frac{\sigma^{p-1}}{\alpha} \leq \nu,
	\end{align*}
	that permits us to use Lemma \ref{lem-3-2}, it follows that
	\begin{align*}
	\mu^+-u\geq\frac{1}{4}\sigma\epsilon\xi_1\omega\quad \text { a.e. in }  Q_{\frac{1}{2} c R}(\widetilde{\theta}).
	\end{align*}
	This and \eqref{5.1} ensure
	\begin{align}
	\label{5.21}
	\operatorname*{ess\,osc}_{Q_{\frac{1}{2}cR}(\widetilde{\theta})} u\leq\left(1-\frac{1}{4}\sigma\epsilon\xi_1\right)\omega.
	\end{align}
	Here, we also need choose $c$ such that \eqref{5.20} does not occur, that is
	\begin{align}
	\label{5.22}
	c \leq\left(\frac{\sigma \epsilon \xi_1}{\gamma}\right)^{\frac{p-1}{p}}.
	\end{align}
	In view of \eqref{5.10} and \eqref{5.21}, we obtain
	\begin{align}
	\label{5.23}
	\operatorname*{ess\,osc}_{Q_{\frac{1}{4}cR}(\theta)} u\leq(1-\eta)\omega:=\omega_1,
	\end{align}
	where
	\begin{align*}
	\eta=\min \left\{\frac{1}{2} \xi_o, \frac{1}{4} \sigma \epsilon \xi_{1}\right\}.
	\end{align*}
	
	In order to get the first step of induction, set $\theta_1=\left(\frac{1}{4} \omega_1\right)^{2-p}$ and $R_1=\frac{1}{4}cR$. We also let
	\begin{align*}
	Q_{R_1}(a \theta_1) \subset Q_{\frac{1}{4} c R}(\theta), \quad \text { i.e., } \quad a \leq(1-\eta)^{p-2}.
	\end{align*}
	Recalling the choice of $a$ in \eqref{5.11}, the number $c$ should be restricted as
	\begin{align}
	\label{5.24}
	c\leq\left[\left(1-\eta\right)^{p-2}\frac{\gamma}{\nu_o}\right]^{\frac{p-1}{p}}.
	\end{align}
	We now derive the oscillation estimate
	\begin{align}
	\label{5.25}
	\operatorname*{ess\,osc}_{Q_{R_1}(a\theta_1)} u\leq\omega_1.
	\end{align}	
	\subsection{Induction}
	Once get \eqref{5.25}, we start to proceed by induction. For any $i=0,1,\cdots,j$, define
	\begin{align*}
	R_o=R,\quad R_i=\frac{1}{4}cR_{i-1},\quad \omega_i=(1-\eta) w_{i-1},\quad\theta_i=\left(\frac{1}{4}\omega_i\right)^{2-p},\quad Q_i=Q_{R_i}(a\theta_i)
	\end{align*}
	and
	\begin{align*}
	\mu_i^+= \operatorname*{ess\,\sup}_{Q_i} u,\quad \mu_i^-= \operatorname*{ess\,\inf}_{Q_i} u,\quad \operatorname*{ess\,osc}_{Q_i} u\leq w_i.
	\end{align*}
	We suppose the above oscillation is true for some $j\geq 0$ and then prove it still hold for $i=j+1$.
	
	As the proof in Section \ref{sec4}, we repeat the previous process with $\mu_j^\pm, \omega_j, R_j, \theta_j, Q_j$ and arrive at
	\begin{align}
	\label{5.26}
	\operatorname*{ess\,osc}_{Q_{\frac{1}{4}cR_j}(\theta_j)} u\leq(1-\eta)\omega_j:=\omega_{j+1}.
	\end{align}
	Let $\theta_{j+1}=(\frac{1}{4} \omega_{j+1})^{2-p}$ and $R_{j+1}=\frac{1}{4} c R_j$. By the choice of  $a\leq(1-\eta)^{p-2}$, we can directly verify
	\begin{align*}
	Q_{R_{j+1}}(a \theta_{j+1}) \subset Q_{\frac{1}{4} c R_j}(\theta_j),
	\end{align*}
	which together with \eqref{5.26} leads to
	\begin{align*}
	\operatorname*{ess\,osc}_{Q_{R_{j+1}}(a\theta_{j+1})} u\leq\omega_{j+1}.
	\end{align*}
	To ensure the oscillation decay \eqref{5.26}, we choose $c$ from \eqref{5.6},\eqref{5.12},\eqref{5.18},\eqref{5.22} and \eqref{5.24} such that the alternative involving the tail does not happen. In the induction process, we again estimate tail as Section \ref{sec4} to obtain
	\begin{align*}
	\mathrm{Tail}\left(\left(\mu_j^+-u\right)_+ ;R_j;-R_j^pa\theta,0\right) \leq \gamma \omega_{j},
	\end{align*}
	and omit the details here. Taking into account the previous estimates, we can determine $c$ which is independent of $j$. Then, by \eqref{5.11} the number $a$ can be chosen finally. Hence, we get the reduction of oscillation which implies the H\"{o}lder continuity of weak subsolutions.

	\section*{Acknowledgments}
	The authors wish to thank the anonymous reviewer for valuable comments and suggestions. This work was supported by the National Natural Science Foundation of China (No. 12071098).


\begin{thebibliography}{[a]}
		\bibitem{AM07} E. Acerbi and G. Mingione, Gradient estimates for a class of parabolic systems, Duke Math. J. 136 (2) (2007) 285--320.
		
		\bibitem{APT22}  K. Adimurthi, H. Prasad and V. Tewary, Local H\"{o}lder regularity for nonlocal parabolic $p$-Laplace equations, arXiv:2205.09695.
		
		\bibitem{BDVV21} S. Biagi, S. Dipierro, E. Valdinoci and E. Vecchi, Mixed local and nonlocal elliptic operators: regularity and maximum principles, Comm. Partial Differential Equations 47 (2022) 585--629.
		
		\bibitem{Bd13} D. Blazevski and D. del-Castillo-Negrete, Local and nonlocal anisotropic transport in reversed shear magnetic fields: Shearless Cantori and nondiffusive transport, Phys. Rev. E 87 (6) (2013) 063106.
		
		\bibitem{BDL21}V. B\"{o}gelein, F. Duzaar and N. Liao, On the H\"{o}lder regularity of signed solutions to a doubly nonlinear equation, J. Funct. Anal. 281(9) (2021) 109173.
		
		\bibitem{BLS21} L. Brasco, E. Lindgren and M. Str\"{o}mqvist, Continuity of solutions to a nonlinear fractional diffusion equation, J. Evol. Equ. 21 (2021) 4319--4381.
		
		\bibitem{CCV11} L. Caffarelli, C. Chan and A. Vasseur, Regularity theory for parabolic nonlinear integral operators, J. Amer. Math. Soc. 24 (3) (2011) 849--869.
		
		\bibitem{CD14} H. Chang-Lara and G. D\'{a}vila, Regularity for solutions of non local parabolic equations, Calc. Var. Partial Differential Equations 49 (2014) 139--172.
		
		\bibitem{CD88} Y. Chen and E. DiBenedetto, On the local behaviour of solutions of singular parabolic equations, Arch. Ration. Mech. Anal. 103 (4) (1988) 319--345.
		
		\bibitem{CD89} Y. Chen and E. DiBenedetto, Boundary estimates for solutions of nonlinear degenerate parabolic systems, J. Reine Angew. Math. 395 (1989) 102--131.
		
		\bibitem{CKSV12} Z. Chen, P. Kim, R. Song and Z. Vondra\v{c}ek, Boundary Harnack principle for $\Delta+\Delta^{\alpha/2}$, Trans. Amer. Math. Soc. 364 (8) (2012) 4169--4205.
		
		\bibitem{CK10} Z. Chen and T. Kumagai, A priori H\"{o}lder estimate, parabolic Harnack principle and heat kernel estimates for diffusions with jumps, Rev. Mat. Iberoam. 26 (2) (2010) 551--589.
		
		\bibitem{D86} E. DiBenedetto, On the local behaviour of solutions of degenerate parabolic equations with measurable coefficients, Ann. Sc. Norm. Sup. Pisa Cl. Sc. Serie (4) (13) 3 (1986) 487--535.
		
		\bibitem{D93} E. DiBenedetto, Degenerate Parabolic Equations, In: Universitext. Springer, New York, 1993.
		
		\bibitem{DF85} E. DiBenedetto and A. Friedman, H\"{o}lder estimates for nonlinear degenerate parabolic systems, J. Reine Angew. Math. 357 (1985) 1--22.
		
		\bibitem{DG08} E. DiBenedetto and U. Gianazza, Harnack estimates for quasi-linear degenerate parabolic differential equations, Acta Math. 200 (2008) 181--209.
		
		\bibitem{DGV12} E. DiBenedetto, U. Gianazza and V. Vespri, Harnack's Inequality for Degenerate and Singular Parabolic Equations, Springer Monographs in Mathematics, Springer-Verlag, New York, 2012.
		
		\bibitem{DZZ21} M. Ding, C. Zhang and S. Zhou, Local boundedness and H\"{o}lder continuity for the parabolic fractional $p$-Laplace equations, Calc. Var. Partial Differential Equations (2021) 60:38.	
		
		\bibitem{DLV21} S. Dipierro, E. P. Lippi and E. Valdinoci, (Non)local logistic equations with Neumann conditions, arXiv:2101.02315.	
		
		\bibitem{FSZ21} Y. Fang, B. Shang and C. Zhang, Regularity theory for mixed local and nonlocal parabolic $p$-Laplace equations, J. Geom. Anal. (2022) 32:22.
		
		\bibitem{FK13} M. Felsinger and M. Kassmann, Local regularity for parabolic nonlocal operators, Comm. Partial Differential Equations 38 (9) (2013) 1539--1573.
		
		\bibitem{F09} M. Foondun, Heat kernel estimates and Harnack inequalities for some Dirichlet forms with non-local part,  Electron. J. Probab. 14 (11) (2009) 314--340.
		
		\bibitem{GK21} P. Garain and J. Kinnunen, On the regularity theory for mixed local and nonlocal quasilinear elliptic equations,  Trans. Amer. Math. Soc., https://doi.org/10.1090/tran/8621.
		
		\bibitem{GK} P. Garain and J. Kinnunen, Weak Harnack inequality for a mixed local and nonlocal parabolic equation, arXiv:2105.15016.
		
		\bibitem{KS14} M. Kassmann and R.W. Schwab, Regularity results for nonlocal parabolic equations, Riv. Mat. Univ. Parma (N.S.) 5 (2014) 183--212.
		
		\bibitem{L20} N. Liao, A unified approach to the H\"{o}lder regularity of solutions to degenerate and singular parabolic equations, J. Differential Equations 268 (10) (2020) 5704--5750.
		
		\bibitem{L22} N. Liao, H\"{o}lder regularity for parabolic fractional $p$-Laplacian, arXiv:2205.10111.
		
		\bibitem{M02} M. Misawa, Local H\"{o}lder regularity of gradients for evolutional $p$-Laplacian systems, Ann. Mat. Pura Appl. 181 (4) (2002) 389--405.
		
		\bibitem{M13} M. Misawa, A H\"{o}lder estimate for nonlinear parabolic systems of $p$-Laplacian type, J. Differential Equations 254 (2) (2013) 847--878.
		
		\bibitem{Str19} M. Str\"{o}mqvist, Local boundedness of solutions to nonlocal parabolic equations modeled on the fractional $p$-Laplacian, J. Differential Equations 266 (2019) 7948--7979.
		
		\bibitem{W86} M. Wiegner, On $C^\alpha$-regularity of the gradient of solutions of degenerate parabolic systems, Ann. Mat. Pura Appl. 4 (145) (1986) 385--405.
		
		
	\end{thebibliography}
\end{document}